\documentclass[11pt]{article}

\usepackage{amscd,amsmath, amssymb, fancyhdr, epsfig, mathbbol, dutchcal, url}
\usepackage[matrix,arrow,curve]{xy}
\usepackage[backref=page]{hyperref}
\usepackage[T1]{fontenc}

\usepackage{tikz-cd}
\usetikzlibrary{patterns}

\renewcommand*{\backrefalt}[4]{%
	\ifcase #1 (Not cited.)%
	\or        (Cited on page~#2.)%
	\else      (Cited on pages~#2.)%
	\fi}

\hypersetup{
	colorlinks   = true,
	citecolor    = magenta,
	linkcolor    =blue,
	urlcolor     =magenta	
}

\numberwithin{equation}{section}

\newcommand{\version}{version 1.0,\ \ June 7, 2026}

\def\eqref#1{(\ref{#1})}
\newcommand{\goth}{\mathfrak}
\newcommand{\g}{{\mathfrak g}}

\newcommand{\arrow}{{\:\longrightarrow\:}}
\newcommand{\Z}{{\Bbb Z}}
\def\C{{\Bbb C}}
\newcommand{\R}{{\Bbb R}}
\newcommand{\Q}{{\Bbb Q}}

\def\1{\sqrt{-1}\:}
\newcommand{\restrict}[1]{{\left|_{{\phantom{|}\!\!}_{#1}}\right.}}
\newcommand{\cntrct}                % contraction with a vector field
{\hspace{2pt}\raisebox{1pt}{\text{$\lrcorner$}}\hspace{2pt}}
\newcommand{\calo}{\mathcal O}

\newcommand{\F}{\mathcal{F}}
\newcommand{\G}{\mathcal{G}}

\renewcommand{\L}{{L}}
\newcommand{\M}{\mathcal{M}}
\newcommand{\OP}{\mathcal{O}}
\DeclareMathOperator{\pr}{pr}
\newcommand{\p}{\mathbb{P}}
\DeclareMathOperator{\Div}{Div}

\renewcommand{\l}{\ell}

\renewcommand{\phi}{\varphi}
\renewcommand{\epsilon}{\varepsilon}
\renewcommand{\geq}{\geqslant}
\renewcommand{\leq}{\leqslant}

\newcommand{\Pic}{\operatorname{Pic}}

\newcommand{\Tot}{\operatorname{Tot}}

\newcommand{\Vol}{{\operatorname{Vol}}}

\newcommand{\Aut}{\operatorname{Aut}}

\newcommand{\Lie}{\operatorname{Lie}}

\newcommand{\Kod}{{\operatorname{Kod}}}

\newcommand{\SG}{\mathfrak{S}}

\newcounter{Mycounter}[section]
\newcounter{lemma}[section]
\renewcommand{\thelemma}{{Lemma \thesection.\arabic{lemma}}}
\newcommand{\lemma}{%
    \setcounter{lemma}{\value{Mycounter}}
    \refstepcounter{lemma}
    \stepcounter{Mycounter}
    {\noindent \bf \thelemma:\ }}

\newcounter{claim}[section]
\renewcommand{\theclaim}{{Claim \thesection.\arabic{claim}}}
\newcommand{\claim}{%
    \setcounter{claim}{\value{Mycounter}}
    \refstepcounter{claim}
    \stepcounter{Mycounter}
    {\noindent \bf \theclaim:\ }}

\newcounter{sublemma}[section]
\newcounter{corollary}[section]
\renewcommand{\thecorollary}{{Corollary \thesection.\arabic{corollary}}}
\newcommand{\corollary}{%
    \setcounter{corollary}{\value{Mycounter}}
    \refstepcounter{corollary}
    \stepcounter{Mycounter}
    {\noindent \bf \thecorollary:\ }}

\newcounter{theorem}[section]
\renewcommand{\thetheorem}{{Theorem \thesection.\arabic{theorem}}}
\newcommand{\theorem}{%
    \setcounter{theorem}{\value{Mycounter}}
    \refstepcounter{theorem}
    \stepcounter{Mycounter}
    {\noindent \bf \thetheorem:\ }}

\newcounter{conjecture}[section]
\newcounter{proposition}[section]
\renewcommand{\theproposition}
      {{Proposition \thesection.\arabic{proposition}}}
\newcommand{\proposition}{%
    \setcounter{proposition}{\value{Mycounter}}
    \refstepcounter{proposition}
    \stepcounter{Mycounter}
    {\noindent \bf \theproposition:\ }}

\newcounter{definition}[section]
\renewcommand{\thedefinition}
      {{Definition~\thesection.\arabic{definition}}}
\newcommand{\definition}{%
    \setcounter{definition}{\value{Mycounter}}
    \refstepcounter{definition}
    \stepcounter{Mycounter}
    {\noindent \bf \thedefinition:\ }}

\newcounter{example}[section]
\renewcommand{\theexample}{{Example \thesection.\arabic{example}}}
\newcommand{\example}{%
    \setcounter{example}{\value{Mycounter}}
    \refstepcounter{example}
    \stepcounter{Mycounter}
    {\noindent \bf \theexample:\ }}

\newcounter{remark}[section]
\renewcommand{\theremark}{{Remark \thesection.\arabic{remark}}}
\newcommand{\remark}{%
    \setcounter{remark}{\value{Mycounter}}
    \refstepcounter{remark}
    \stepcounter{Mycounter}
    {\noindent \bf \theremark:\ }}

\newcounter{problem}[section]
\newcounter{question}[section]
\renewcommand{\thequestion}{{Question \thesection.\arabic{question}}}
\newcommand{\question}{%
    \setcounter{question}{\value{Mycounter}}
    \refstepcounter{question}
    \stepcounter{Mycounter}
    {\noindent \bf \thequestion:\ }}

\newcommand{\proof}{\noindent{\bf Proof:\ }}

\newcommand{\pstep}{\noindent{\bf Proof. Step 1:\ }}

\makeatletter

\@addtoreset{equation}{section} \@addtoreset{footnote}{section}

\def\x@arrow{\DOTSB\Relbar}
\def\xlongrightarrowfill@{\arrowfill@\relbar\relbar\longrightarrow}
\newcommand{\xlongrightarrow}[2][]{%
        \ext@arrow 0099\xlongrightarrowfill@{#1}{#2}}
\makeatother

\def\blacksquare{\hbox{\vrule width 5pt height 5pt depth 0pt}}
\def\endproof{\blacksquare}

\begin{document}

%%%%%%%%%%%%%%%%%%%%%%%%%%%%%%%%%%%%%%%%%%%%%%%%%%%%%%%%%%%%
\begin{center}
{\LARGE\bf Projective subvarieties of 
Bogomolov-Guan manifolds and quasi-diagonals in products of elliptic curves
\\[4mm]
}
%%%%%%%%%%%%%%%%%%%%%%%%%%%%%%%%%%%%%%%%%%%%%%%%%%%%%%%%%%%%%

Leila Abubakarova, Alexandra Kuznetsova, Misha Verbitsky\footnote{M. V. acknowledges support 
of CNPq - Process 310952/2021-2,
and FAPERJ SEI-260003/000410/2023. \\

{\small {\bf 2020 Mathematics Subject
Classification: 32Q99, 14J42, 14H12}
}}

\end{center}

{\small \hspace{0.10\linewidth}
\begin{minipage}[t]{0.85\linewidth}
{\bf Abstract.} 
We study complex subvarieties
in certain non-K\"ahler holomorphically symplectic 
manifolds $X$, called the Bogomolov-Guan manifolds. 
Let $E$ be an elliptic curve, $L$ an ample line bundle on
$E$, $S\subset E^2$ a complex curve, and $p_1, p_2$ the
corresponding projections of $S$ to $E$.  The curve $S$ 
is called {\bf a quasi-diagonal} if $p_1^*L\otimes p_2^* L^{-1}$
is a torsion line bundle. We show that there
are at most countably many quasi-diagonals for any
$(E,L)$. Using the quasi-diagonals, 
we classify the projective subvarieties in the Bogomolov-Guan 
manifold. The Bogomolov-Guan manifold
is equipped with a Lagrangian fibration
$\pi:\; X \to \C P^n$. We show that
an irreducible complex subvariety $Z\subset X$
is Moishezon if and only if $\pi(Z)$ is
a point or a certain complex curve which 
is described in terms of quasi-diagonals. 
This is used to prove that for a general
Bogomolov-Guan manifold, any projective
subvariety belongs to a fiber of $\pi$.
\end{minipage}
}

\tableofcontents

%%%%%%%%%%%%%%%%%%%%%%%%%%%%%%%%%%%%%%%%%%%%%%%%%%%%%%%%%%%%%%%%%%%%%%%%%%%%

\section{Introduction}

%%%%%%%%%%%%%%%%%%%%%%%%%%%%%%%%%%%%%%%%%%%%%%%%%%%%%%%%%%%%%%%%%%%%%%%%%%%%

%%%%%%%%%%%%%%%%%%%%%%%%%%%%%%%%%%%%%%%%%%%%%%%%%%%%%%%%%%%%
\subsection{Bogomolov-Guan manifolds}
%%%%%%%%%%%%%%%%%%%%%%%%%%%%%%%%%%%%%%%%%%%%%%%%%%%%%%%%%%%%

In an MPIM preprint \cite{_Todorov:MPIM_}, 
A. Todorov claimed that any compact,
simply connected holomorphically symplectic manifold with
$h^{2,0}=1$ is K\"ahler.
However, in \cite{_Guan1_,_Guan2_,_Guan3_}, D. Guan constructed examples of
manifolds which are compact, simply connected, holomorphically symplectic
but non-K\"ahler. These examples, constructed from nilmanifolds, were later studied further and explained by F. Bogomolov (\cite{_Bogomolov:BG_}), who used the Hilbert scheme of
the Kodaira surface. We call these complex manifolds {\bf Bogomolov-Guan manifolds} or {\bf BG-manifolds}, 
and they are the central objects of study in this paper.

Although Bogomolov's construction of the Bogomolov-Guan manifolds 
is quite intricate (see Section \ref{_section_BG_construction_} for the details)
a step-by-step analysis reveals that their geometry resembles, in many respects, that of irreducible holomorphic symplectic manifolds. In particular, these manifolds have even dimensions, and the first example of such a manifold occurs in dimension~4.

The first systematic study of the geometric properties of Bogomolov-Guan manifolds was carried out by Kurnosov and Verbitsky in \cite{_KV_deformations_}.
They proved that the holomorphically symplectic deformations of 
these manifolds are unobstructed, and demonstrated 
that they admit an analogue of the Beauville-Bogomolov-Fujiki 
form. 

Furthermore, it was shown in \cite{_BKKY_} that the algebraic reduction 
of a $2n$-dimensional Bogomolov-Guan manifold $BG$ is a morphism 
$P\colon BG\to\p^n$ to a projective space of half the dimension. 
The generic fiber of $P$ is an abelian variety, and the symplectic 
form vanishes on the fibers; thus, $BG$ admits a Lagrangian fibration. 
The study of algebraic subvarieties in \cite{_BKKY_} also revealed a 
strong restriction: any subvariety of $BG$ whose image under the 
Lagrangian fibration $P$ has dimension at least 2 cannot be projective.

%Finally, the group of biholomorphic automorphisms
%of a $2n$-dimensional Bogomolov-Guan manifold  $BG$ 
%were studied in \cite{_BKKY_}, it was proved 
%that this group is an extension of a finite group by a subgroup of 
%automorphisms of a generic fiber of Lagrangian fibration. 

%%%%%%%%%%%%%%%%%%%%%%%%%%%%%%%%%%%%%%%%%%%%%%%%%%%%%%%%%%%%
\subsection{Bogomolov-Guan precursors}\label{_subsection_BG_precursor_}
%%%%%%%%%%%%%%%%%%%%%%%%%%%%%%%%%%%%%%%%%%%%%%%%%%%%%%%%%%%%

Bogomolov's construction of Bogomolov-Guan manifolds is long and complicated. Therefore, in this paper, we focus on their precursors: these are complex manifolds that are simpler to construct but closely related to the original ones.

As in the original construction of Bogomolov, we start from a
(primary) Kodaira surface $\Kod$ (\ref{_definition_Kodaira_}),
which can be obtained as follows.
Let $\Tot^\circ(L)$ be  the total space of all non-zero vectors
in an ample bundle over an elliptic curve $E$; it is a 
principal $\C^*$-fibration over $E$. Let $\Z$ act on
$\Tot^\circ(L)$ by homothety, multiplying each vector
by a fixed complex number $\lambda$, with $|\lambda|>1$.
The Kodaira surface is defined as the quotient $\Tot^\circ(L)/\Z$;
it is a holomorphic principal elliptic fibration over an elliptic curve.

The precursor is constructed as follows. We fix a Kodaira surface 
$\Kod$ and consider its algebraic reduction $\pi\colon\Kod\to E$, where both the base $E$ and the general fiber $E_1$ are elliptic curves.
Let $\sigma\colon E^{n+1}\to E$ be the morphism 
$\sigma(x_1,\dots, x_{n+1}) = x_1+\dots+x_{n+1}$. 
We denote by $D$ the fiber of the composition 
$\sigma\circ \pi^{n+1}\colon \Kod^{n+1}\to E$ over a point $e\in E$. 
The natural action of $E_1$ on $\Kod$ induces a diagonal action on $D$. 
The quotient $\Kod^n/E_1$ is called {\bf the unrestricted Bogomolov-Guan precursor},
and $X:=D/E_1$ is a $2n$-dimensional complex manifold which 
we call a {\bf restricted Bogomolov-Guan precursor} or {\bf BG-precursor}.
We observe that the precursor depends on the Kodaira surface $\Kod$
and on the integer $n$.

The Bogomolov-Guan precursor $X$ inherits rich geometric properties. 
The holomorphic symplectic form on the Kodaira surface $\Kod$ induces 
a holomorphic symplectic form $\Omega$ on $X$. Furthermore, if we denote by $A$ the 
fiber of $\sigma\colon E^{n+1}\to E$ over $e$, then 
the precursor admits a natural map $P_X\colon X\to A\subset E^{n+1}$, which coincides with the  
Lagrangian fibration on $X$ with respect to $\Omega$. It is easy to observe that $A\cong E^{n}$.

The construction of the actual Bogomolov-Guan manifold 
$BG$ is similar, but it replaces the $(n+1)$-fold product $\Kod^{n+1}$ 
with the Hilbert scheme of $n+1$ points on $\Kod$. In addition, after taking 
the quotient by the diagonal action of $E_1$, one must pass to a specific 
finite cover to resolve the singularities. For details, see Section 
\ref{_section_BG_construction_}. 

The Bogomolov-Guan manifold $BG$ and its precursor $X$ are related via the 
action of the symmetric group $\SG_{n+1}$. Specifically, $\SG_{n+1}$ acts 
naturally on $X$, and there exists a generically finite morphism 
$Q\colon BG \to X/\SG_{n+1}$. This relationship is summarized by 
the following commutative diagram:
\[
\xymatrix{ BG \ar[dd]_{P} \ar[dr]^{Q} && X \ar[dd]^{P_X} \ar[ld]_{Q_X}\\
   &X/\SG_{n+1}&\\
   \p^n && E^n \ar[ll]_{\alpha}
}
\]
Here, $Q_X$ denotes the quotient map (which is finite), while $Q$ is generically finite. 
The morphism $\alpha\colon E^n\to \p^n$ is defined in the following way:
\[
  \alpha\colon E^n\to E^{[n+1]}; \ \ \ \ \ \alpha(x_1,\dots, x_n)= \{x_1,\dots, x_n, -x_1-\dots-x_n\}.
\]
The image of $\alpha$ can be verified to be isomorphic to $\p^n$. 
In the following sections, we demonstrate that the subvarieties of $BG$ 
contained in the fiber $P^{-1}(Z)$ share the same geometric properties 
as the subvarieties of $X$ contained in 
$P_X^{-1}(\alpha^{-1}(Z))$ for $Z\subset \p^n$.

By construction, the Bogomolov-Guan manifold
is equipped with a projective map to a finite quotient
of its precursor (\ref{_remark_subvarieties_in_BG_and_Kod_mod_E1_}). 

We are in a position to explain the starting point of the
research carried in this paper. It is not hard to see
that all positive-dimensional subvarieties of the Kodaira
surface are obtained from the fibers of the elliptic
fibration. Indeed, consider the Kodaira surface 
$\Kod=\Tot^\circ(L)/\Z$ associated
with a line bundle $L$ on an elliptic curve $E$ 
(\ref{_definition_Kodaira_}), and
let $\pi:\; \Kod \to E$ be the standard fibration.
Since the pullback of $L$ to $\Tot^\circ(L)$ is trivial,
this line bundle is flat on $\Kod$, which implies that $c_1(\pi^*(L))=0$.
In other words, the pullback of the K\"ahler
form $\omega_E$ on $E$ to $\Kod$ is exact. This implies
that no curve $S\subset \Kod$ can be transversal
to fibers of $\pi$, indeed, $\int_S\pi^*\omega_E=0$,
which cannot happen if $TS$ contains a vector which
is not tangent to the fibers of $\pi$.

This argument can be easily generalized, bringing
classification-type results for many other non-K\"ahler
complex manifolds
(\cite{_PUV:moment-angle_,_OV:Oeljeklaus-Toma_,_OVV:Oeljeklaus-Toma_flat_}).

Unfortunately, 
direct attempt to apply this method to BG-manifolds yields no result beyond what was already obtained in \cite{_BKKY_}.
Consider the Lagrangian projection $\pi:\; X \to E^n$,
and let $V\subset H^2(E^n)$ be the space generated
by the K\"ahler classes of the individual elliptic curve
components in the product. Then $\pi^*:\; V \to H^2(X)$
has a rank 1 kernel. Using the standard arguments
(\ref{_Proof_for_class_C_Theorem_},
Step 1), this can be used to show that for any positive-dimensional
K\"ahler or Fujiki class C subvariety  $Z\subset X$,
the restriction map $V \to H^2(Z)$ has the same kernel
as $\pi^*:\; V \to H^2(X)$. Translated to geometry,
this implies that $\pi(Z)$ is a curve, and its
projection to components of $E^{n+1} \supset E^n$
has the same degree (\ref{_Proof_for_class_C_Theorem_}).
This result, originally proven in \cite{_BKKY_},
gives a complete characterization of K\"ahler and Fujiki
class C subvarieties in precursors, and, therefore, in
BG-manifolds.

To continue this work further, we focus on algebraic
(projective or Moishezon) subvarieties in BG-manifolds and
their precursors. In this case, we work with pullbacks
of non-torsion line bundles instead of pullbacks of the
K\"ahler forms. It is not hard to see 
that a pullback of a non-torsion
line bundle under a dominant map
$X \to Y$ cannot become torsion,
if $X$ and $Y$ are K\"ahler or
Fujiki class C
(\ref{_pullback_vanishes_when_torsion_Remark_}). 
This gives a more granular form of
Blanchard's theorem (\ref{_Blanchard_Kahler_Theorem_})
on K\"ahlerness of principal toric bundles.
As explained in \cite{_Hofer:remarks_} (see also 
Subsection \ref{_principal_toric_Subsection_}),
holomorphic principal toric bundles on $M$
are classified by $H^1(M, {\cal T})$, where ${\cal T}$
is a sheaf of holomorphic maps to an $n$-dimensional
 compact complex torus.
The coboundary map in the standard exact sequence \eqref{_exact_Hofer_Equation_}
\[ 
0\to H^1(X, \Z_X^{2n})\stackrel \phi \to H^{0,1}(X)^n\to
H^1(X, {\Bbb T})\to H^2(X, \Z_X^{2n})\to ...
\]
produces a collection of classes in $H^2(X, \Z_X^{2n})$
called {\bf the Chern classes} of the principal toric fibration.
Blanchard proved   (\ref{_Blanchard_Kahler_Theorem_})
that the total space of a principal toric bundle 
over a K\"ahler manifold is K\"ahler if and only 
if its Chern classes are torsion. We prove that 
it is projective if and only if the corresponding classes
in $H^1(X, {\Bbb T})$ are torsion
(\ref{_projective_toric_Theorem}).
Applied to BG-precursors, this implies 
the following classification of projective and Moishezon
subvarieties in a BG-precursor $\pi:\; X \to E^n$.

\hfill

\theorem \label{_thm_leading_to_qd_}
Let $\pi:\; X \to E^n$ be a BG-precursor
associated with a curve $E$ and an ample line bundle $L$
as above. Denote by $L_{ij}$, $i, j=1, ..., n+1$ 
the bundle $p^*_i(L)\otimes p^*_j(L^{-1})$, where $p_i:\; E^{n+1}\to E$
denotes the projection to the $i$-th factor, and
the base $E^n$ of the BG-precursor is considered
as a subvariety in $E^{n+1}$.
Let $Z\subset X$ be an irreducible subvariety of
positive dimension in a BG-precursor.
Then the following are equivalent.
\begin{description}
\item[(i)]
$\pi(Z)$ is a curve, and the pullback of
$L_{ij}$ to $\pi(Z)$ is a torsion line bundle;
\item[(ii)]
$Z\subset X$ is projective.
\end{description}

\proof \ref{_proj_then_quasidiag_Theorem_},
\ref{_proj_dim=1_base_Remark_}.
\endproof

\hfill

\remark This statement is true for restricted
precursors, as well as for the unrestricted ones.

\hfill

This theorem naturally leads us to the main notion
of this paper --- quasi-diagonals in a product of elliptic curves.

%%%%%%%%%%%%%%%%%%%%%%%%%%%%%%%%%%%%%%%%%%%%%%%%%%%%%%%%%%%%
\subsection{Quasi-diagonals in a product of elliptic curves}
%%%%%%%%%%%%%%%%%%%%%%%%%%%%%%%%%%%%%%%%%%%%%%%%%%%%%%%%%%%%

Fix an elliptic curve $E$ and a line bundle $L\in \Pic_d(E)$, 
where $d\geqslant 1$ and consider 
an irreducible reduced curve $S$ in the direct product $E^n$ 
and denote by $p_i\colon S\to E$ the projection to the $i$-th 
component of the product. We say that $S$ is a {\bf quasi-diagonal 
over $(E,L)$} 
if the line bundle $p_i^*L\otimes p_j^*L^{-1}$ is a torsion 
element in $\Pic_0(S)$ for all $1\leqslant i\leqslant j\leqslant n$. 
If, moreover, $S$ lies in the fiber of the map $\sigma\colon E^n\to E$ 
such that $\sigma(x_1,\dots, x_n) = x_1+\dots+ x_n$, then
we say that $S$ is a {\bf restricted quasi-diagonal}.

As we stated in \ref{_thm_leading_to_qd_} a subvariety $Y\subset X$ is projective if 
and only if its projection $p(Y)\subset E^{n+1}$ is either a point or a 
restricted quasi-diagonal over $(E,L)$  where $L$ is a line bundle associated with $X$.

Quasi-diagonals in $E^n$ are interesting geometric objects in their own right. 
We construct several examples of such curves (see Section 
\ref{_section_examples_of_qd_}) and establish some of their properties. 
Our main result concerning quasi-diagonals is that they cannot appear 
in continuous families: for a fixed line bundle $L$, the set of 
quasi-diagonals in $E^n$ over $(E,L)$ is at most countable.

However, many questions on this subject remain open. One of the most intriguing 
problems is how to construct new examples of quasi-diagonals and whether one 
can classify them in any meaningful way. This problem is highly non-trivial 
even for quasi-diagonals in $E^2$ and a complete answer remains out of reach. 
Note that in this case a quasi-diagonal $S\subset E^2$ defines a self-correspondence 
of $E$, suggesting the use of results from the theory of correspondences. 

For instance, an important notion in the study of self-correspondences of curves
in \cite{_Bellaiche_} is an {\bf irreducible complete set}: we say 
that $Z\subset E$ is such a set for a correspondence $S\subset E^2$ 
if $p_1^{-1}(Z) = p_2^{-1}(Z)$. By \cite[Theorem 1]{_Bellaiche_}, if 
for a curve $S\subset E^2$ there exist infinitely many irreducible 
complete sets $Z\subset E$, then there exists a morphism $f\colon E\to \p^1$ 
such that $f\circ p_1 =  f\circ p_2\colon S\to \p^1$; that is, $S$ is a 
quasi-diagonal for the line bundle $L = f^*\OP_{\p^1}(1)$. 
On the other hand, if the number of irreducible complete sets is 
small, the self-correspondence exhibits trivial dynamics in view 
of \cite[Theorem 2]{_Bellaiche_}.

%%%%%%%%%%%%%%%%%%%%%%%%%%%%%%%%%%%%%%%%%%%%%%%%%%%%%%%%%%%%
\subsection{Quasi-diagonals and complex geometry of BG-precursor}
%%%%%%%%%%%%%%%%%%%%%%%%%%%%%%%%%%%%%%%%%%%%%%%%%%%%%%%%%%%%

Surprisingly, we were able to apply our argument
in the converse direction, deducing theorems about
quasi-diagonals from results on complex geometry
of Bogomolov-Guan precursors. 

Let $X \to E^n$ be a BG-precursor.
It is not hard to see that as well as the Kodaira surface, 
the corresponding BG-precursor is
 a nilmanifold (see Subsection
 \ref{_Nilmanifold_Subsection_}),
that is, a quotient of a nilpotent Lie group
by a cocompact, discrete subgroup acting by translations.
This nilpotent group is not a complex Lie group.
However, it carries an invariant complex structure,
obtained from a complex structure operator on 
its Lie algebra. The integrability of this complex
structure is interpreted
as a Lie algebra condition
(\ref{_integra_comple_on_Lie_Definition_}).

A Hermitian metric $\omega$ 
on a complex manifold is called {\bf pluriclosed},
or {\bf SKT}, if it satisfies $dd^c\omega=0$.

Special metrics on nilmanifolds are related
to 2-forms on their Lie algebra. If we identify the space
of invariant forms on a nilmanifold with the Grassmann
algebra of its Lie algebra $\g$, de Rham differential
is induced from a map $\Lambda^1(\g) \to \Lambda^2(\g)$
which is dual to the Lie bracket. Knowing an
explicit expression for the Lie bracket and
the complex structure, it is not hard to show
that a given 2-form is $dd^c$-closed.

Using this construction, we 
obtain a pluriclosed form on any 
BG-precursor manifold (\ref{_precursor_SKT_Theorem_}).

An argument based on Gromov's compactness
(\cite{_Verbitsky:rat-twi_}) implies then
that each component in the Barlet 
moduli space of complex curves on a pluriclosed manifold
$X$ is compact (\ref{_SKT_then_Barlet_compact_Corollary_}).
This was used to show that all families of quasi-diagonals
on a product of elliptic curves are discrete; in particular, there
are countably many quasi-diagonals 
(\ref{_quasidiag_countable_via_SKT_Theorem_}).
An algebraic proof of this result is given in
\ref{_theorem_qd_}.

%%%%%%%%%%%%%%%%%%%%%%%%%%%%%%%%%%%%%%%%%%%%%%%%%%%%%%%%%%%%
\subsection{Main results}
%%%%%%%%%%%%%%%%%%%%%%%%%%%%%%%%%%%%%%%%%%%%%%%%%%%%%%%%%%%%
Our first result describes projective and Fujiki class C subvarieties in a BG-manifold and its 
precursor.
Here we use notation introduced in 
Section \ref{_subsection_BG_precursor_}.

\hfill

\theorem\label{_theorem_BG_mfld_and_precursor_}
Let $Y$ be a subvariety of a Bogomolov-Guan manifold $BG$ 
(or of its precursor $X$).
Set $Z=\alpha^{-1}(P(Y))$ (or $Z = P_X(Y)$, respectively). 
Then $Y$ has the following properties:
\begin{enumerate}
    \item[(1)] $Y$ is projective if and only if all components 
    of $Z$ are points or quasi-diagonals\footnote{Restricted quasi-diagonals
for the BG-manifold and the restricted precursor, any quasi-diagonals for the
unrestricted precursor.} over $(E,L)$ where
    $E$ and $L$ are associated with 
    the Kodaira surface $\Kod$ (see \ref{_definition_Kodaira_}).
    \item[(2)] $Y$ is in Fujiki class C if and only if each component 
    $S$ of $Z$ is either a point or a curve such that the degrees of 
    the projections $p_i\colon S\to E$ to the components of the product are the 
    same for all $1\leqslant i\leqslant n+1$.
\end{enumerate}

\hfill

This theorem, as well as the methods we used in its proof (see 
\ref{_BKKY_main_Theorem_}, 
\ref{_Proof_for_class_C_Theorem_},
\ref{_proj_then_quasidiag_Theorem_}) are very similar 
to \cite[Theorem B]{_BKKY_}. However, by analyzing the construction in more 
detail we have slightly improved the result. In particular, we showed that 
a Moishezon subvariety of a Bogomolov-Guan manifold is necessarily 
projective.  Also we show that any smooth projective
subvariety in a BG-precursor is necessarily K\"ahler
(\ref{_Proof_for_class_C_Theorem_}).

In order to understand better the projective subvarieties in 
Bogomolov-Guan manifolds and precursors, we study quasi-diagonals.
Our second result shows that quasi-diagonals never come in continuous 
families. 

\hfill

\theorem\label{_theorem_qd_}
Let $E$ be an elliptic curve and $L$ be an ample line bundle on $E$. 
Then, for any integer $n \geqslant 2$, 
the product $E^n$ contains at most a countable set of
quasi-diagonals over $(E,L)$.

\hfill

We provide two different proofs of this theorem
using distinct methods. In both proofs the first step is to reduce 
to the case $n=2$, see \ref{_quasi-diagonals_in_En_Theorem_}.

The first proof in Section \ref{_section_families_of_qd_}
uses purely algebro-geometric methods: we assume that there exists a family
of quasi-diagonals over $(E,L)$ in $E^2$ parametrized by a variety $T$, 
consider the universal space $\pi\colon X\to T$ of the family and 
study the inverse image of the 
bundle $M = p_1^*L\otimes p_2^*L^{-1}$ to $X$. 
On the one hand, the property of quasi-diagonals 
implies that $M$ equals a pull-back of a line bundle on $T$. 
On the other hand, we show that the inverse image of the 
line bundle $p_i^*L$ to $X$ is supported on a multisection of $\pi$. 
This leads us to a contradiction.

The second proof in Section \ref{_Countability_coreduction_Subsection_}
uses Campana's coreduction theorem. This theorem 
states that if $X$ is a sufficiently good compact complex variety
then there exists a meromorphic map $X\to Y$ which contracts 
almost all Moishezon subvarieties in $X$. If $E^2$ contains a 
continuous family of quasi-diagonals then the variety $X = \Kod^2/E_1$
satisfies the coreduction theorem and two general points of $X$
are connected by a chain of projective varieties similarly 
to \ref{_theorem_BG_mfld_and_precursor_}. Thus, the coreduction 
map contracts $X$ to a point, which is impossible since $X$
is not Moishezon (a Kodaira surface $\Kod$ can be embedded in $X$).

\ref{_theorem_qd_} demonstrates that quasi-diagonals are quite rare.
Thus, one can ask whether there exists a quasi-diagonal or restricted quasi-diagonal
over $(E,L)$ for a given elliptic curve $E$ and a line bundle $L\in \Pic(E)$.
We construct several examples which show that in some cases we get 
large sets of (restricted) quasi-diagonals while generally there are no 
restricted quasi-diagonals.

\hfill

\proposition\label{_proposition_qd_}
Let $E$ be an elliptic curve and $L$ be an ample line bundle on $E$. 
Then the following assertions hold.
\begin{enumerate}
    \item[(1)] For each elliptic curve $E$, a line bundle $L\in \Pic_d(E)$ for 
    $d\geqslant 1$ and integer $n\geqslant 2$ there is a countable set of 
    quasi-diagonals over $(E,L)$ in $E^n$.
    \item[(2)] If $n$ is even or if the $j$-invariant of $E$ equals $0$
    then there exists $L\in \Pic_d(E)$ where $d\geqslant 1$ such that 
    there is a countable set of restricted quasi-diagonals over $(E,L)$ in $E^n$.
    \item[(3)] For each elliptic curve $E$ and integer $n\geqslant 2$ the set 
    of line bundles $L\in \Pic_d(E)$ for $d\geqslant 1$ such that $E^n$ admits
    no restricted quasi-diagonal over $(E,L)$ is a complement 
    of a countable subset in $\Pic_d(E)$.
\end{enumerate}

\hfill

The first and the second assertions of \ref{_proposition_qd_}
are proved with concrete constructions provided in Section 
\ref{_section_examples_of_qd_}. The third assertion 
follows from \ref{_theorem_qd_},
see \ref{_corollary:_line_bundles_for_restricted_qd_Corollary}.

As an immediate consequence of \ref{_theorem_BG_mfld_and_precursor_} 
and \ref{_proposition_qd_}, 
we obtain a generic property of Bogomolov-Guan manifolds. 

\hfill

\corollary
 Let $\Kod$ be a Kodaira surface associated to an elliptic curve 
 $E$ and a line bundle $L\in \Pic_d(E)$ as in \ref{_definition_Kodaira_}.
 If $L\in \Pic_d(E)$ is very general then all projective subvarieties in 
 a Bogomolov-Guan manifold or its restricted precursor obtained from $\Kod$ 
 are strictly contained within the fibers of its Lagrangian fibration.

%%%%%%%%%%%%%%%%%%%%%%%%%%%%%%%%%%%%%%%%%%%%%%%%
\subsection{Structure of this paper}
%%%%%%%%%%%%%%%%%%%%%%%%%%%%%%%%%%%%%%%%%%%%%%%%

In Section \ref{_section_qd}, we formally define
quasi-diagonals and restricted quasi-diagonals, provide
several explicit examples (including symmetric curves and
non-elliptic quasi-diagonals), and reduce the study of
quasi-diagonals in $E^n$ to $E^2$. The section concludes
with the algebraic proof of \ref{_theorem_qd_} and states
several open questions.

In Section \ref{_section_bg_manifolds}, we review the
theory of principal toric and elliptic fibrations,
describe the construction of Kodaira surfaces as
nilmanifolds, and detail the construction of
Bogomolov-Guan manifolds and their precursors. Using the
theory of principal toric bundles, we then prove the
classification theorems for projective and Fujiki class C
subvarieties in the Bogomolov-Guan manifolds.

In Section \ref{_section_bg_precursors}, we prove the
analogous classification result for subvarieties in the
Bogomolov-Guan precursor, completing the proof of
\ref{_theorem_BG_mfld_and_precursor_}.

Section \ref{_section_Barlet_spaces_} provides a
complex-analytic perspective on our rigidity results. We
recall the definitions of Barlet spaces, Campana's
coreduction map, and SKT (pluriclosed) metrics. We show
that the Bogomolov-Guan precursor admits an SKT
metric. Using Gromov's compactness theorem for Barlet
spaces, we provide an independent geometric proof of
\ref{_theorem_qd_}.

Finally, Appendix \ref{section: no quasi-diagonals over
  lines} presents a detailed analysis of the preimages of
general lines in a $4$-dimensional Bogomolov-Guan
manifold, demonstrating that they do not yield
quasi-diagonals in general and further illustrating the
scarcity of restricted quasi-diagonals.

%%%%%%%%%%%%%%%%%%%%%%%%%%%%%%%%%%%%%%%%%%%%%%%%%%%%%%%%%%%%%%%%%%%%%%%%%%%%

\section{Quasi-diagonals in a product of elliptic curves}\label{_section_qd}

%%%%%%%%%%%%%%%%%%%%%%%%%%%%%%%%%%%%%%%%%%%%%%%%%%%%%%%%%%%%%%%%%%%%%%%%%%%%

%%%%%%%%%%%%%%%%%%%%%%%%%%%%%%%%%%%%%%%%%%%%%%%%%%%%%%%%%%%%%%%%%%%%%%%%
\subsection{Quasi-diagonals: definition and basic properties}\label{_section_qd_definition_}
%%%%%%%%%%%%%%%%%%%%%%%%%%%%%%%%%%%%%%%%%%%%%%%%%%%%%%%%%%%%%%%%%%%%%%%%

\remark
In this paper we assume that {\bf an elliptic curve}
has a fixed point and a group operation, and
{\bf a 1-dimensional complex torus} is 
a 1-dimensional smooth complex curve of genus 1.
This distinction will be important in the sequel.

\hfill

\definition
The group of line bundles on a complex manifold $X$ is denoted by $\Pic(X)$,
and is called {\bf the Picard group}. When $X$ is a compact complex curve,
we have $\deg(L)=\int_X c_1(L)$. Then the Picard group is a union
of $\Z$ connected components $\Pic_i(X)$, with
$\Pic_i(X)$ denoting the space of line bundles of degree $i$.
Each $\Pic_i(X)$ is a torsor over the group $\Pic_0(X)$, which is
called {\bf the Jacobian of $X$}. The Jacobian is
a complex torus, isomorphic to $E$ when $X=E$ is an elliptic curve.

\hfill

\remark
Let $E$ be an elliptic curve. Then 
the standard action of $\Pic_0(E)$ on $\Pic(E)$
 coincides with the action induced by the action of $E$ on itself
by translations.

\hfill

\definition
Let $L$ be an ample line bundle on an elliptic curve $E$.
Let $p_i:\; E^n \arrow E$ denote the projection to the $i$-th component.
{\bf A quasi-diagonal} over $(E,L)$ is a curve $S\subset E^n$
such that for all $i, j$, the bundle $V_{i,j}:=p_i^*L|_S\otimes (p_j^*L|_S)^{-1}$
is torsion, that is, satisfies $V_{i,j}^{\otimes k}=\calo_S$
for some $k>0$.

\hfill

\remark
The degree of the bundle $p_i^*L|_S$ is equal to $\deg L \cdot d_i$,
where $d_i$ is the degree of the restriction $p_i:\; S \arrow E$.
Therefore,  the map $p_i:\; S \arrow E$
 has the same degree for all $i$ if $S$ is a quasi-diagonal.

\hfill

One of the aims of the present paper is the following
theorem. 

\hfill

\theorem
There are at most countably many quasi-diagonals for any
given $(E,L)$.

\proof \ref{_at_most_countably_many_quasi-diagonals_Theorem_}. \endproof

\hfill

\remark
The notions we discussed in this section so far are
``identity agnostic'': they are 
defined for a 1-dimensional complex torus, without
fixing the unity. The notions which are identity agnostic
are functorial with respect to the action of an elliptic
curve on itself by parallel transport.
The definition of 
``restricted quasi-diagonal'' in the next paragraph is no longer identity
agnostic: it is defined in terms of a group structure
on $E$ and requires one to fix the identity element.

\hfill

\definition
A {\bf restricted quasi-diagonal} is a 
quasi-diagonal $S\subset E^n$ such that the natural summation
map $E^n \to E$, $(x_1,..., x_n)\to \sum x_i$ takes $S$ to a point.

\hfill

\remark 
For any elliptic curve $E$,  
a line bundle $L$ on $E$ and an integer $n\geqslant 2$ 
the pair $(E,L)$ admits a countable set of quasi-diagonals in $E^n$
(\ref{_quasi-diagonals_from_automorphisms_of_E_Example_}).
The restricted quasi-diagonals are, by contrast,
rather rare, and do not exist for general $(E,L)$, 
by \ref{_corollary:_line_bundles_for_restricted_qd_Corollary}. 
Both types of quasi-diagonals correspond
to algebraic subvarieties in certain non-K\"ahler
manifolds (see \ref{_BKKY_main_Theorem_} and \ref{_proj_then_quasidiag_Theorem_}).

\hfill

\remark
In Subsection~\mbox{\ref{_qd_reduced_to_E^2_Subsection_}} we are going to reduce the classification of quasi-diagonals in $E^n$
to the classification of quasi-diagonals in $E^2$.

%%%%%%%%%%%%%%%%%%%%%%%%%%%%%%%%%%%%%%%%%%%%%%%%%%%%%%%%%%%%%%%%%%%%%%%%
\subsection{Examples of quasi-diagonals and restricted quasi-diagonals}\label{_section_examples_of_qd_}
%%%%%%%%%%%%%%%%%%%%%%%%%%%%%%%%%%%%%%%%%%%%%%%%%%%%%%%%%%%%%%%%%%%%%%%%

\example \label{_quasi-diagonals_from_automorphisms_of_E_Example_}
Let $\nu:\; E \arrow E$ be an automorphism of order $d$.
Then $\widetilde{L}:=\prod_{i=0}^{d-1} (\nu^i)^* L_1$ is $\nu$-invariant, for any given $L_1$, hence
the graph $\Gamma_\nu\subset E^2$ of $\nu$ is a quasi-diagonal for $(E,\widetilde{L})$.
Note that the map $\Pic_k(E) \to \Pic_{dk}(E)$ given by $L \mapsto L^{\otimes d}$ is surjective. 
Thus, there exists a line bundle $L$ such that $L^{\otimes d} = \widetilde{L}$ and $\Gamma_{\nu}$ is 
a quasi-diagonal for $(E,L)$ as well. 

Since $\Pic_0(E)$ acts transitively on $\Pic_k(E)$,
this also implies the existence of a countable set of quasi-diagonals in $E^d$
for all $(E,L)$.

%The group $\Aut(E)$ contains all translation 
%by torsion elements; therefore, for any elliptic curve $E$ 
%and any positive integers $k$ and $d$ there exists 
%a line bundle $L$ in $\Pic_k(E)$
%and a quasi-diagonal in $E^d$ for $(E,L)$. 

\hfill

\example \label{example: restricted quasi-diagonal for even} 
Here we construct a restricted 
quasi-diagonal in $E^{2n}$. Let $\nu = -\mathrm{id}_E$. 
Then the graph of $\nu$ is a restricted quasi-diagonal 
over $(E,\OP_E(e))$
where $e$ is the zero point in $E$. Taking the
fiber products of several quasi-diagonals of this  type 
as in \ref{_quasi-diagonals_in_En_Theorem_} below,
we can construct a restricted quasi-diagonal in $E^{2n}$
for any integer $n\geqslant 1$.

\hfill

%%%%%%%%%%%%%%%%%%%%%%%%%%%%%%%%%%%%%%%%%%%%%%%%%%%%%%%%%%%%%%%%%%%%%%%%
\example \label{_restricted_quasi-diagonal_for_3_Example_} 
Here we construct a restricted quasi-diagonal in $E^3$ for 
an elliptic curve $E$  whose $j$-invariant equals $0$.
In this case
$E =\C/ \mathbb{Z}[\zeta]$ where $\zeta$ is a primitive 3rd degree root of unity. 
Denote by $e$ the zero on the elliptic curve $E$ and let $S$ be the image of 
the following embedding to $E^3$:
\begin{align*}
 \left(\mathrm{id}_E\times\zeta\times \zeta^2\right)\colon E \to E^3; && x\mapsto (x,\zeta x, \zeta^2 x).
\end{align*}
Then $S$ is a restricted quasi-diagonal for the line bundle $\OP_E(e)$. 
Taking fiber product of quasi-diagonals in \ref{example: restricted quasi-diagonal for even}
and in \ref{_restricted_quasi-diagonal_for_3_Example_} we can construct 
a restricted quasi-diagonal in $E^n$ for $E  =\C/ \mathbb{Z}[\zeta]$ and any integer $n\geqslant 5$.

\hfill

\example \label{example: non-elliptic quasi-diagonal} 
Here we give an example of a quasi-diagonal which is not an elliptic curve.
Let $S = \{(x:y:z)\in \p^2\mid x^4+y^4+z^4 = 0\}$ be a Fermat quartic curve. 
Denote by $\sigma$ and $\phi$ the following automorphisms of $S$:
\begin{align*}
 \sigma(x:y:z) = (z:x:y); && \phi(x:y:z) = (-x:y:z).
\end{align*}
The orders of automorphisms $\sigma $ and $\phi$ equal $3$
and $2$ respectively. Note also that $\sigma$ and $\phi$
do not commute. By the Hurwitz formula  one can check that the
curve $E = S/\langle \sigma\rangle$ is elliptic.
Denote by $p$ the factorization map $p\colon S\to S/\langle \sigma\rangle$ by the group $\langle \sigma\rangle = \mathbb{Z}/3\mathbb{Z}$.
 Consider then the following map:
\[
 f = (p,p\circ \phi)\colon S\to E^2
\]
The map $f$ is not injective: for example, $f((1:0:\zeta)) = f((-\zeta:1:0))$ where $\zeta$ is a degree 8 primitive root of unity.
Nevertheless, let us check that the map $f$ is generically 1 to 1 onto its image. Assume that the distinct points $s_1$ and $s_2$ map to the same point under $f$. This implies that
\begin{equation}\label{_projections_endom_Equation_}
  p(s_1) = p(s_2); \ \ \ \  p(\phi(s_1)) = p(\phi(s_2)).
\end{equation}
By construction, $p(s_1)=p(s_2)$ implies~$s_2 = \sigma^{k}
s_1$ for $k=\pm 1$. The second equation of
\eqref{_projections_endom_Equation_} implies that
$\phi(\sigma^{k} (s_1)) = \sigma^l(\phi(s_1))$  for $l=\pm 1$. Thus, $s_1$ and $s_2$ map to the same point under $f$ if and only if $s_2 = \sigma^{k} s_1$ and $s_1$ is a fixed point of the automorphism $g = \phi\circ \sigma^{-l}\circ \phi\circ \sigma^k$ where $k,l = \pm 1$. An easy computation shows that for any choice of $k$ and $l$ the automorphism $g$ is not the identity on $S$. Thus, $f$ is generically $1$ to $1$.

This shows that $f$ is a birational map to its image;
thus, $f(S)$ is a singular non-elliptic curve of geometric
genus $3$. By \ref{_symmetric_curves_are_quasi-diagonals_Lemma_}
below, the image is a quasi-diagonal
for a suitable ample line bundle on the curve $E$.

\hfill

\remark \label{_remark_translations_of_qd_by_torsion_}
Let  $\varphi = (\varphi_1,\dots,\varphi_n) 
\colon S\hookrightarrow E^n$ be an embedding of a quasi-diagonal 
over $(E,L)$, where $L$ is an ample bundle. Fix the
unity and a 
set $t_1,\dots,t_n$ of $k$-torsion elements in $E$. Consider the following 
new embedding of $S$ to $E^n$:
\begin{align*}
\varphi_{t_1,\dots,t_n}\colon S\hookrightarrow E^n; && \varphi_{t_1,\dots,t_n}(x) = (\varphi_1(x)+t_1,\dots,\varphi_n(x)+t_n).
\end{align*}
Then $\varphi_{t_1,\dots,t_n}(S)\subset E^n$ is a quasi-diagonal 
over $(E,L)$. Indeed, for any divisor $D\subset E$ of
degree $l$ associated with the line bundle $B=O(D)$,  
the translation $\tau_{t_i}$ by $t_i$ takes $B$ to
$\tau_{t_i}^*(B)=\calo(D+l(t_i-e))$, which implies $B^k=\tau_{t_i}^*(B)^k$.
Applying this argument to $L$, we obtain that
$\varphi_{t_1,\dots,t_n}^*(L^k)= \varphi^*(L^k)$.
Therefore, if $E^n$ contains a quasi-diagonal 
then it contains a countable set of quasi-diagonals.
By the same reason, once $E^n$
contains a restricted quasi-diagonal, then it contains a
countable set of them: the translation by
$t_1,\dots,t_n$ preserves the restricted
quasi-diagonals if $\sum t_i=0$.

\hfill

%%%%%%%%%%%%%%%%%%%%%%%%%%%%%%%%%%%%%%%%%%%%%%%%%%%%%%%%%%%%%%%%%%%%%%%%
\subsection{Quasi-diagonals in $E^n$ and in $E^2$}
\label{_qd_reduced_to_E^2_Subsection_}
%%%%%%%%%%%%%%%%%%%%%%%%%%%%%%%%%%%%%%%%%%%%%%%%%%%%%%%%%%%%%%%%%%%%%%%%

\remark 
Let $\Pi:\; E^n \arrow E^m$ be a projection of $E^n$
onto $m$ of its factors, $m\leq n$. Clearly, $\Pi(S)$ is a quasi-diagonal for
any quasi-diagonal $S\subset E^n$. This implies, in
particular, the following claim:

\hfill

%%%%%%%%%%%%%%%%%%%%%%%%%%%%%%%%%%%%%%%%%%%%%%%%%%%%%%%%%%%%
\remark
Consider the projection $p_{i,j}:\; E^n \arrow E^2$,
$p_{i,j}(x_1, ..., x_n)=(x_i, x_j)$. Then,  for any quasi-diagonal
$S\subset E^n$, its projection $p_{i,j}(S)$
is a quasi-diagonal in $E^2$ for any $i, j$. Conversely,
 $S\subset E^n$ is a quasi-diagonal if $p_{i,j}(S)$ is a quasi-diagonal
for any $i,j$.

\hfill

Taking fibered products of quasi-diagonals in $E^2$, 
we can obtain all quasi-diagonals in $E^n$ (but {\em not}
restricted quasi-diagonals).

\hfill

\definition
Let $f:\; X\arrow Z$, $g:\; Y\arrow Z$ be any maps.
{\bf The fibered product} $X\times_Z Y$
is the set $\{(x,y)\in X\times Y\ \ |\ \ f(x)=g(y)\}$.

\hfill

%%%%%%%%%%%%%%%%%%%%%%%%%%%%%%%%%%%%%%%%%%%%%%%%%%%%%%%%%%%%
\theorem \label{_quasi-diagonals_in_En_Theorem_}
Let $S_1, ..., S_{n-1}$ be quasi-diagonals in $E^2$ for the same line bundle $L$ on $E$.
Consider the fibered product
$E^n= (E^2)\times_E (E^2)\times_E (E^2)\times_E ... \times_E (E^2)$.
where the projection of $E^2$ to the left factor is $(x,y)\arrow x$
and to the right is $(x,y)\arrow y$. 
Then $S = S_1\times_E S_2 \times_E ... \times_E S_{n-1}$
is a quasi-diagonal, and all quasi-diagonals are obtained this way.

\hfill

\proof Denote by $p_{i,j}\colon S_i\to E$ the restriction
to the curve $S_i$ of the projection~\mbox{$E^2\to E$} to
the $j$-th component of the product for $1\leqslant
i\leqslant n-1$ and $j = 1$ or $2$. Since~$S_i$ is a
quasi-diagonal,  one has $p_{i,1}^*L \otimes
\left(p_{i,2}^*L \right)^{-1} =: \M_i$ is torsion. Denote
by  $q_i\colon S\to S_i$ the projection to the $i$-th
component of the fiber product. By construction 
$p_i = p_{i,1}\circ q_i = p_{i-1,2}\circ q_{i-1}\colon
S\to E$ is the restriction to $S$ of the projection
$E^n\to E$ to the $i$-th component. Thus, for all
$2\leqslant i\leqslant n$ one has the following equality:
\begin{multline*}
p_{i-1}^*L = q_{i-1}^*(p_{i-1,1}^*L) = q_{i-1}^*(p_{i-1,2}^*L\otimes \mathcal{M}_{i-1}) = \\ = q_i^*(p_{i,1}^*L)\otimes q_{i-1}^*(\M_{i-1}) = p_i^*L\otimes q_{i-1}^*(\M_{i-1}).
\end{multline*}
Since $\M_{i-1}$ is torsion by construction, 
we deduce that $S$ is a quasi-diagonal.

Now if $S \subset E^n$ is a quasi-diagonal  and $p_i\colon S\to E$ is the projection to the $i$-th component then we define $S_i = (p_i,p_{i+1})(S)\subset E^2$ for all $1\leqslant i\leqslant n-1$. One can easily check that $S_i$ is a quasi-diagonal and $S$ is isomorphic to the corresponding fiber product.
\endproof

\subsection{Symmetric curves in $E^n$ and quasi-diagonals} \label{section: lines don't give restricted qd}

We denote the
symmetric group of $n$-tuples by $\SG_n$.
If $S\subset E^n$ is a quasi-diagonal then the degrees of
projections $p_i\colon S\to E$ are the same for all
$1\leqslant i\leqslant n$.  Thus, it is natural to look for
quasi-diagonals among $\SG_n$-symmetric curves in $E^n$. 
\ref{_symmetric_curves_are_quasi-diagonals_Lemma_} 
shows that under an additional assumption (which is rather strong in fact) a symmetric curve is indeed a quasi-diagonal. 

\hfill

\lemma \label{_symmetric_curves_are_quasi-diagonals_Lemma_}
Let $S\subset E^n$ be a $\SG_n$-invariant curve.
Assume that the projection map $p_1\colon S\to E$ is a Galois
cover, that is there exists a normal subgroup $H\subset
\Aut(S)$ such that $p_1$ is the factorization map 
$S\to S/H\cong E$. Then there exists an ample 
line bundle $L$ such that $S$ is a
quasi-diagonal over $(E,L)$.

\hfill

\proof Denote by $p_i\colon S\to E$ the restriction of the
projection $E^n\to E$ on the $i$-th component to $S$. 
Let $\sigma_i\in \SG_n$ be a transposition $(1,i)$. Since $S$ is 
$\SG_n$-invariant, $\sigma_i$ induces an automorphism of $S$ 
and one has $p_i = p_1\circ \sigma_i$.
 
Let $D$ be an $\Aut(S)$-invariant
subset of points on $S$. Since $\SG_n$ acts
on $S$, $D$ is an $\SG_n$-invariant subset on $E^n$.
Since $D$ is $H$-invariant, we get
$\OP_S(D) = p_1^*L$, where $L = \OP_E(p_1(D))$. On the
other hand, since $\sigma_i^*D = D$, we also observe that
$\OP_S(D) = p_i^*L$ for all $1\leqslant i\leqslant n$. 
\endproof

\hfill

\remark
Recall that $\sigma\colon E^3 \to E$ is the map
$(x,y,z)\mapsto x+y+z$. Denote by $\alpha\colon E^3\to
E^{(3)}$ the symmetrization map. Note that $E^{(3)}$ is
isomorphic to a $\p^2$-bundle over $E$ and $\sigma$
commutes with the $\p^2$-bundle structure. 
The quasi-diagonal described in 
\ref{_restricted_quasi-diagonal_for_3_Example_} lies in
the kernel of $\sigma$ (hence it is a restricted
quasi-diagonal). By \ref{_lemma_restricted_qd_in_E3_maps_to_line_} 
its image under $\alpha$ is a
line in $\p^2$. Note that the preimage of this line 
under $\alpha$ is reducible: it consists of $S$ and $\tau(S)$
where $\tau$ is a transposition in $\SG_3$.

Inspired by this fact, we study preimages of lines in
$\p^2$. In the appendix \ref{section: no quasi-diagonals over lines} 
we show that the preimage of a general line in $\p^2$ is a
smooth $\SG_3$-invariant curve in $E^3$. However, 
once the preimage of a line in $E^3$ is smooth, it is not
a quasi-diagonal for any line bundle $L$ on $E$
(\ref{_proposition_S_l_is_not_quasi-diagonal_Lemma_}).

\hfill

%%%%%%%%%%%%%%%%%%%%%%%%%%%%%%%%%%%%%%%%%%%%%%%%%%%%%%%%%%%%%%%%%%%%%%%%
\subsection{Families of quasi-diagonals}\label{_section_families_of_qd_}
%%%%%%%%%%%%%%%%%%%%%%%%%%%%%%%%%%%%%%%%%%%%%%%%%%%%%%%%%%%%%%%%%%%%%%%%

The main result of this subsection is the following theorem.

\hfill

%%%%%%%%%%%%%%%%%%%%%%%%%%%%%%%%%%%%%%%%%%%%%%%%%%%%%%%%%%%%%%%%%%%%%%%%
\theorem \label{_at_most_countably_many_quasi-diagonals_Theorem_}
Let $E$ be an elliptic curve and $L$ an ample line bundle on $E$.
Then $E^n$ contains at most countably many quasi-diagonals over $(E,L)$.

\hfill

We prove this result at the end of this
subsection. Before,
we notice that
\ref{_at_most_countably_many_quasi-diagonals_Theorem_}
has an immediate corollary.

\hfill

%%%%%%%%%%%%%%%%%%%%%%%%%%%%%%%%%%%%%%%%%%%%%%%%%%%%%%%%%%%%%%%%%%%%%%%%
\corollary \label{_corollary:_line_bundles_for_restricted_qd_Corollary}
Let $E$ be an elliptic curve.
Then the set of line bundles $L\in \Pic_m(E)$ such that there exists a restricted quasi-diagonal in $E^n$ for $L$
is at most countable.

\hfill

\pstep
The group $E$ acts on itself by
translations; we denote an automorphism
associated with $x\in E$ by $\tau_x$. Clearly, for any element $x\in E$,
this action takes quasi-diagonals $S \subset  E^n$
over $(E, L)$ to quasi-diagonals over $(E,\tau_{-x}^*L)$.

\hfill

{\bf Step 2:}
Fix an ample line bundle $L\in \Pic_m(E)$ over $E$.
Let $z\in E$ and let ${\goth Q}_z(E, L)$
denote the set of quasi-diagonals $S\subset  E^n$ over $(E,L)$ which satisfy
$\sigma(s) =z$, where $\sigma:\; E^n\to E$ is the
summation map.
Clearly, $\tau_x({\goth Q}_z(E, L))={\goth Q}_{mx}(E, \tau_{-x}^*L)$.
Since $E$ acts on $\Pic_m(E)$ transitively, for each
$L'\in \Pic_m(E)$ there exists $x\in E$ such that
$\tau_{-x}^*L'=L$. Therefore, for any
restricted quasi-diagonal $S\subset  E^n$ over $(E, L_1)$,
the curve $\tau_xS\subset E^n$ is a quasi-diagonal over
$(E,L)$, and satisfies $S\in {\goth Q}_{mx}(E, L)$.
If the set of restricted quasi-diagonals
is non-empty for noncountably many $L\in \Pic_m(E)$,
there is a quasi-diagonal $S\in {\goth Q}_{mx}(E, L)$
for non-countably many points $mx\in E$.
This implies that the set of quasi-diagonals over $(E,L)$
is non-countable, contradicting 
\ref{_at_most_countably_many_quasi-diagonals_Theorem_}.
\endproof

\hfill

In order to prove
\ref{_at_most_countably_many_quasi-diagonals_Theorem_} we
need the following variation of the rigidity lemma
(\cite[Proposition 5.3]{_Kollar_}).

\hfill

\begin{lemma}\label{_lemma:_map_to_curve_factors_Lemma_}
  Let $f\colon X\to Y$ and $g\colon X\to Z$ be surjective morphisms between smooth varieties and assume that $f$ is proper. Assume moreover that $Z$ is a projective curve, that a general fiber of $f$ is connected and that there exists $y_0\in Y$ such that $g(f^{-1}(y_0))$ is a point in $Z$. Then there exists a rational map $h\colon Y\dashrightarrow Z$ such that $g = h\circ f$.
 \end{lemma}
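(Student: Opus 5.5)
The plan is to follow the standard proof of the rigidity lemma, using that $Z$ is a curve. Fix a point $z_0=g(f^{-1}(y_0))\in Z$ and choose an affine open neighbourhood $W\subset Z$ of $z_0$. The complement $Z\setminus W$ is a finite set of points, so $g^{-1}(Z\setminus W)$ is a closed subset $F\subset X$. Set
\[
U := Y\setminus f(F).
\]
Since $f$ is proper, $f(F)$ is closed, so $U$ is open. Moreover $y_0\in U$, because $f^{-1}(y_0)$ maps to $z_0\in W$ and hence does not meet $F$. Therefore $U$ is a nonempty Zariski open subset of the (irreducible, since it is a smooth connected variety) space $Y$, and by construction $g(f^{-1}(U))\subset W$.

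Next we show that $g$ is constant on the fibers of $f$ over $U$. For $y\in U$ the fiber $f^{-1}(y)$ is a proper (compact) subvariety, and $g$ maps it into the affine curve $W$. A proper connected variety mapping to an affine variety has image a point, since regular functions on it are constant. Thus for every $y\in U$ with $f^{-1}(y)$ connected, $g(f^{-1}(y))$ is a single point. By hypothesis a general fiber is connected, so after shrinking $U$ to a dense open subset $U'$ over which fibers are connected, we obtain a set-theoretic map $h\colon U'\to W$ with $g=h\circ f$ on $f^{-1}(U')$. Even without connectedness, each fiber over $U$ maps to finitely many points of $W$, one per connected component.

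Then we check that $h$ is a morphism. The cleanest route is the following. Consider the graph morphism $(f,g)\colon f^{-1}(U')\to U'\times W$, and let $\Gamma$ be its image. Since $f$ is proper, $f^{-1}(U')\to U'$ is proper, hence so is $f^{-1}(U')\to U'\times W$, and $\Gamma$ is closed. The projection $\Gamma\to U'$ is bijective and proper. Since $Y$ is smooth, hence normal, Zariski's main theorem (in characteristic $0$, a bijective proper birational morphism onto a normal variety is an isomorphism) shows that $\Gamma\to U'$ is an isomorphism. Then $h$ is its inverse followed by the projection to $W$, so $h$ is regular on $U'$ and gives the desired rational map $h\colon Y\dashrightarrow Z$. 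Since $g=h\circ f$ holds on the dense open set $f^{-1}(U')$, it holds as rational maps, and in fact as morphisms wherever $h$ is defined.

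The main obstacle I expect is this last regularity step: verifying that $\Gamma\to U'$ is birational, i.e.\ that $\Gamma$ is reduced and irreducible. For this I would use the surjectivity of $f$ and the irreducibility of $X$, which makes $\Gamma$ irreducible, together with generic smoothness in characteristic $0$. An alternative route avoids Zariski's main theorem: pulling back rational functions on $W$ via $g$ gives functions on $f^{-1}(U')$ that are constant on the connected fibers. Using $f_*\mathcal O_X=\mathcal O_Y$, which holds after Stein factorization because general fibers are connected and $Y$ is normal, these functions descend to regular functions on $U'$.
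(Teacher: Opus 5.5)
Your proof is correct, but it follows a different route from the paper.

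The paper's argument is cohomological. It fixes a line bundle $\L$ on $Z$ with $h^0(Z,\L)\geqslant 2$ and studies $y\mapsto h^0(X_y,(g^*\L)|_{X_y})$. This function is $1$ at $y_0$, so by semicontinuity it is at most $1$ for general $y$. On the other hand it would be at least $2$ whenever $g(X_y)=Z$. Since $Z$ is a curve, $g(X_y)$ is either a point or all of $Z$, so $g$ contracts the general fiber. The regularity of $h$ on the resulting open set is then simply asserted.

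You instead run Mumford's topological rigidity argument. You take the open set $U=Y\setminus f\bigl(g^{-1}(Z\setminus W)\bigr)$ for an affine neighbourhood $W\ni z_0$, and use that a proper connected scheme maps to a point of an affine variety. This buys several things:
\begin{itemize}
\item You need no flatness of $f$ at $y_0$, which the semicontinuity theorem presupposes.
\item You need no control of $h^0(X_{y_0},\mathcal{O}_{X_{y_0}})$, which equals $1$ only when $X_{y_0}$ is connected and reduced.
\item You never use that $Z$ is a curve.
\item You actually prove that $h$ is regular on a dense open set.
\end{itemize}
The paper's argument is shorter, and it exploits the curve hypothesis through the point-or-everything dichotomy.

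Two small remarks on your last step. First, properness of $\Gamma\to U'$ is immediate if you regard $\Gamma$ as the closed image of $f^{-1}(U')$ in $U'\times Z$. That product is proper over $U'$ because $Z$ is projective. Second, characteristic $0$ enters exactly in passing from ``bijective'' to ``birational'', via generic smoothness as you say. Once $\Gamma\to U'$ is birational, the rational map $h$ already exists. Zariski's main theorem, or your alternative via $f_*\mathcal{O}_X=\mathcal{O}_Y$, is needed only to make $h$ regular on all of $U'$.
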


 \hfill
 
 \begin{proof}  
  Fix a line bundle $\L$ on $Z$ such that $h^0(Z,\L) \geqslant 2$.
  For a general point $y\in Y$ denote by $X_y$ the fiber 
$f^{-1}(y)\subset X$. Consider the following function on $Y$:
  \[
   y\mapsto h^0\left(X_y, (g^*\L)|_{X_y}\right).
  \]
  Since $g|_{X_{y_0}}$ maps $X_{y_0}$ to a point, then
  $(g^*\L)|_{X_y}$ equals $\OP_{X_y}$. By the
  semi-continuity theorem we deduce that 
$h^0\left(X_y, (g^*\L)|_{X_y}\right) \leqslant 1$ 
for a general point $y\in Y$. On the other hand, if $g(X_y) = Z$ then $h^0\left(X_y, (g^*\L)|_{X_y}\right)$ is at least $h^0(Z,\L) \geqslant 2$. Therefore, there exists an open subset $U\subset Y$ such that for any $y\in U$ one has that~$g(X_y)$ is a point. This correctly defines the rational map $h$ which is regular on the open subset $U$.
 \end{proof}

\hfill
 
 Let $E$ be an elliptic curve and let $\L$ be an ample line bundle on $E$. Denote by $\pr_i\colon E^2\to E$ the projection to the $i$-th component of the direct product, where $i = 1$ or $2$. 

\hfill

  \begin{lemma}\label{lemma: x-y is torsion equivalently the line bundle is torsion}
  Let $x$ and $y$ be points on an elliptic curve $E$. The line bundle~$\OP_E(x-y)$ is a torsion if and only if the difference $x-y$ is a torsion on~$E$.
 \end{lemma}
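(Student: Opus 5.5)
The plan is to identify $\Pic_0(E)$ with $E$ through the Abel--Jacobi map and then read off the torsion condition directly. Fix the zero point $e\in E$. The standard map
\[
\Phi\colon E\to \Pic_0(E),\qquad z\mapsto \OP_E(z-e),
\]
is a group isomorphism; this is Abel's theorem for genus one. Concretely, a divisor $\sum n_k z_k$ of degree $0$ is principal if and only if $\sum n_k z_k=0$ in the group law of $E$.

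I will first check that $\OP_E(x-y)\cong \OP_E((x-y)-e)$, where on the right-hand side $x-y$ denotes the point given by the group law. The difference of the two divisors is $x-y-(x-y)+e$. It has degree $0$, and its sum in the group law is $x-y-(x-y)+0=0$, so by Abel's theorem it is principal. Hence $\OP_E(x-y)=\Phi(x-y)$.

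Since $\Phi$ is a group isomorphism, $\OP_E(x-y)^{\otimes k}=\Phi(k(x-y))$. This is trivial if and only if $k(x-y)=0$ in $E$, by injectivity of $\Phi$ (equivalently, since $k(x-y)$ is principal exactly when $k(x-y)-k\cdot(\text{point }x-y)\ldots$, i.e.\ by Abel again). So $\OP_E(x-y)$ is torsion of order $k$ exactly when $x-y$ is a $k$-torsion point, which proves both directions.

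There is no real obstacle here. The only step needing care is invoking Abel's theorem correctly, i.e.\ the fact that a degree-zero divisor on $E$ is principal precisely when its sum under the group law vanishes. If one prefers an analytic argument, this follows from $E=\C/\Lambda$ and the Weierstrass $\sigma$-function. Alternatively, one can use the translation formula $\tau_t^*\OP(D)=\OP(D+l(t-e))$ from \ref{_remark_translations_of_qd_by_torsion_}, combined with $\Pic_0(E)\cong E$ as stated in the definition of the Jacobian.
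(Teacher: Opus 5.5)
Your proof is correct and follows the paper's argument: identify $E$ with $\Pic_0(E)$ via $z\mapsto \OP_E(z-e)$ and use that a group isomorphism preserves torsion. Your Abel's-theorem check that $\OP_E(x-y)\cong\OP_E((x-y)-e)$ spells out the reduction the paper states as ``one can assume $y=e$''; the garbled parenthetical after ``injectivity of $\Phi$'' adds nothing and can be dropped.
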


\hfill
 
 \begin{proof}
  One can assume that $y = e$ is the zero point on the elliptic curve~$E$. There is the abelian variety isomorphism between the elliptic curve $E$ and its dual:
  \begin{align*}
   \varphi\colon E\xrightarrow{\cong} \Pic_0(E); && \varphi(x) = \OP_{E}(x - e).
  \end{align*}
  Thus, the image of the point $x\in E$ under the map $\varphi$ is a torsion line bundle  if and only if it is a torsion point on $E$.
 \end{proof}

\hfill
 
 \begin{lemma}\label{lemma: F1 - F2 is not torsion}
  Let $e$ be a point on $E$. Then $ \OP_{E^2}(\pr_1^{-1}(e) - \pr_2^{-1}(e))$ is not a torsion line bundle on $E^2$.
 \end{lemma}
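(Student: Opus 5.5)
The plan is to detect non-torsionness by restricting to suitable curves, since a line bundle that is torsion on $E^2$ remains torsion after pullback to any curve. Write $M=\OP_{E^2}(\pr_1^{-1}(e)-\pr_2^{-1}(e))\cong \pr_1^*\OP_E(e)\otimes \pr_2^*\OP_E(e)^{-1}$. If $M^{\otimes k}\cong\OP_{E^2}$ for some $k>0$, then for every curve $C\subset E^2$ the restriction $M|_C$ is also $k$-torsion. It therefore suffices to find a single curve $C$ on which $M|_C$ is not torsion.

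A degree count alone will not work for the diagonal or for horizontal and vertical curves. Restricting $M$ to the diagonal gives the trivial bundle. On a horizontal curve $E\times\{y\}$ the restriction is $\OP_E(e)$, of degree $1$, which is not torsion, and this already settles the lemma. So the simplest argument is to take $C=E\times\{y\}$ for any $y\in E$. Then $\pr_1|_C$ is an isomorphism and $\pr_2|_C$ is constant, so
\[
M|_C\cong \OP_E(e)\otimes \OP_C,
\]
which has degree $1$. A line bundle of nonzero degree cannot be torsion, because a torsion bundle has degree $0$. Hence $M$ is not torsion.

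As an alternative that stays closer to the spirit of the preceding lemma, I would use the graph of a translation, $C=\{(x,x+t)\}$ for a non-torsion point $t\in E$. The restriction is then $\OP_E(e)\otimes\OP_E(e-t)^{-1}\cong\OP_E(t-e)$, which has degree $0$ and is non-torsion by \ref{lemma: x-y is torsion equivalently the line bundle is torsion}. This version is probably the one relevant later, since quasi-diagonals have equal projection degrees. I do not expect any real obstacle beyond checking carefully the pullback of $\OP_E(e)$ under the translation, that is, identifying $\tau_t^*\OP_E(e)$ with $\OP_E(e-t)$ up to sign conventions. Either sign choice yields a non-torsion bundle.
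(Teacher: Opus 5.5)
Your proof is correct. Your alternative paragraph is exactly the paper's proof, but your main argument takes a different and more elementary route.

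\textbf{The paper's route.} It restricts to the graph $\{(x,x+t)\}$ of translation by a non-torsion point $t$. This gives the degree-zero bundle $\OP_E([e]-[e-t])$. To see that this is non-torsion, the paper needs \ref{lemma: x-y is torsion equivalently the line bundle is torsion}, i.e.\ the isomorphism $E\cong\Pic_0(E)$.

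\textbf{Your main route.} You restrict to $E\times\{y\}$. There $\pr_2$ is constant and the restriction is $\OP_E(e)$, of degree $1$. So you only need the fact that torsion bundles have degree $0$.

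Your argument also proves more. The first Chern class $[\{e\}\times E]-[E\times\{e\}]$ pairs to $1$ with $[E\times\{y\}]$. Hence the bundle is not even topologically torsion, and it does not lie in $\Pic_0(E^2)$.

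\textbf{What the paper's choice buys.} It shows non-torsionness on a curve whose two projections have equal degree. That is exactly where your degree obstruction vanishes, and it is the setting of quasi-diagonals. The lemma as stated does not require this.

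\textbf{A downstream point.} Your observation matters when this lemma is combined with \ref{lemma: pullback is torsion}. That lemma is formulated only for bundles in $\Pic_0(E^2)$, and this bundle is not in $\Pic_0(E^2)$. So at that point one should also use that pullback under a surjective morphism of projective varieties is injective on $H^2(\cdot,\Q)$.
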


 \hfill
 
 \begin{proof}
   Let $\varphi\colon E\to E^2$ be the map $\varphi(x) = (x,x+t)$ where $t\in E$ is not a torsion point. Then $\varphi^*(\OP_{E^2}(\pr_1^{-1}(e) - \pr_2^{-1}(e)))\cong \OP_E([e] - [e-t])$ and this line bundle cannot be torsion by \ref{lemma: x-y is torsion equivalently the line bundle is torsion}.
 \end{proof}

\hfill
 
 \begin{lemma}\label{lemma: pullback is torsion}
  Let $\alpha\colon X\to E^2$ be a surjective morphism, where $X$ is a projective variety. If $\alpha^*\L$ is a torsion line bundle then so is the line bundle $\L\in \Pic_0(E^2)$.
 \end{lemma}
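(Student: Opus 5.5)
The plan is to show that pullback along $\alpha$ is injective on $\Pic_0(E^2)$ modulo torsion. Concretely, we prove that the kernel of $\alpha^*\colon \Pic_0(E^2)\to\Pic(X)$ is finite, which gives the claim: if $\alpha^*\L$ is torsion, say $(\alpha^*\L)^{\otimes k}=\OP_X$, then $\L^{\otimes k}$ lies in this finite kernel, hence is torsion. (The statement implicitly assumes $\L\in\Pic_0(E^2)$, as the bundles it will be applied to are of the form $\OP_{E^2}(\pr_1^{-1}(e)-\pr_2^{-1}(e))$ twisted by degree-zero pieces, or $p_1^*L\otimes p_2^*L^{-1}$.)

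First we reduce to a curve. Since $\alpha$ is surjective and $X$ is projective, we can choose an irreducible curve $C\subset X$ with $\alpha(C)$ not contained in a translate of a proper abelian subvariety of $E^2$. To do this, cut $X$ by general hyperplane sections down to a curve $C$ mapping onto a curve that is ample in $E^2$; an image of a general complete intersection curve is such a curve. Replace $C$ by its normalization $\tilde C$ and consider $\beta=\alpha|_{\tilde C}\colon \tilde C\to E^2$. It then suffices to show that $\beta^*\colon\Pic_0(E^2)\to \Pic_0(\tilde C)=J(\tilde C)$ has finite kernel, because $\beta^*\L=(\alpha^*\L)|_{\tilde C}$ is torsion whenever $\alpha^*\L$ is.

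Next we use the Albanese property. The map $\beta$ (after translation) factors through the Abel–Jacobi map as $\tilde C\to J(\tilde C)\xrightarrow{\psi}E^2$, with $\psi$ a homomorphism. The pullback $\beta^*$ on $\Pic_0$ is the dual homomorphism $\hat\psi\colon \widehat{E^2}\to \widehat{J(\tilde C)}=J(\tilde C)$. Since $\beta(\tilde C)$ generates $E^2$ as a group (it is not contained in a translate of a proper abelian subvariety), $\psi$ is surjective. The dual of a surjective homomorphism of abelian varieties has finite kernel: if $K\subset\ker\hat\psi$ had positive dimension, dualizing would show that $\psi$ factors through a quotient of smaller dimension, contradicting surjectivity. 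Hence $\ker\beta^*$ is finite, and we are done.

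The main obstacle is the choice of $C$, namely guaranteeing that its image generates $E^2$. If $X\to E^2$ is surjective, the image of a general complete intersection curve of sufficiently ample divisors meets every fiber-type curve positively. An alternative route avoids this: apply the argument directly to the Albanese map of a resolution $\tilde X$ of $X$. Surjectivity of $\alpha$ gives a surjection $\mathrm{Alb}(\tilde X)\to E^2$, so $\Pic_0(E^2)\to\Pic_0(\tilde X)$ has finite kernel. Then pull back along $\tilde X\to X$, which is harmless because $\alpha^*\L$ torsion on $X$ implies torsion on $\tilde X$. I would present this Albanese version as the primary argument.
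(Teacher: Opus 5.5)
Your primary argument is correct and is essentially the paper's proof: pass to a resolution $\tilde X$, note that $\mathrm{Alb}(\tilde X)\to E^2$ is surjective, conclude that the dual map $\Pic_0(E^2)\to\Pic_0(\tilde X)$ has finite kernel, and deduce that $\L$ is torsion; the resolution step only makes precise what ``the Albanese variety of $X$'' means when $X$ is singular. One aside in your parenthetical is mistaken, though it does not affect the lemma as stated: $\pr_1^*L\otimes\pr_2^*L^{-1}$ and $\OP_{E^2}(\pr_1^{-1}(e)-\pr_2^{-1}(e))$ have Chern class a nonzero multiple of $[\pr_1^{-1}(e)]-[\pr_2^{-1}(e)]$ (whose square is $-2$), so they are not in $\Pic_0(E^2)$, and for such bundles non-torsionness of the pullback comes instead from injectivity of pullback on $H^2(\,\cdot\,,\Q)$ along the surjection $\tilde X\to E^2$.
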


\hfill

 \begin{proof}
  The map $\alpha$ factors through the Albanese variety of
  $X$ which maps surjectively to $E^2$. Thus, the inverse
  image map $\alpha^*\colon \Pic_0(E^2)\to \Pic_0(X)$ is
  an isogeny to its image. In particular, the kernel of
  the map $\alpha^*$ is finite; this implies the
  assertion.
 \end{proof}

\hfill

%%%%%%%%%%%%%%%%%%%%%%%%%%%%%%%%%%%%%%%%%%%%%%%%%%%%%%%%%%%%
\remark\label{_pullback_vanishes_when_torsion_Remark_}
The same argument works to prove the following
theorem: let $f:\; X \to Y$ be a dominant map of 
projective varieties (or varieties of Fujiki class C).
Then the pullback of a non-torsion line bundle is
never torsion. Indeed, $f$ induces an injective map 
$H^{0,1}(Y)\to H^{0,1}(X)$, hence $f^*$ induces a map on Picard
varieties $\Pic(Y) \to \Pic(X)$ which has finite kernel.

\hfill

 Now we are ready to prove \ref{_at_most_countably_many_quasi-diagonals_Theorem_}.

\hfill

 \noindent{\bf Proof of \ref{_at_most_countably_many_quasi-diagonals_Theorem_}.}
  {\bf Step 1:} Assume that for all $N\geqslant 1$ we can prove that there exists 
  an at most countable set of quasi-diagonals $\iota\colon C\hookrightarrow E^2$ such 
  that $\iota^*(\pr_1^*\L)^{\otimes N} = \iota^*(\pr_2^*\L)^{\otimes N}$. Then we 
  get the result since a countable union of countable sets is countable. Thus,
  further we fix $N\geqslant 1$, replace the line bundle $\L$ by $\L^{\otimes N}$
  and prove that there exists at most countable set of quasi-diagonals 
  $\iota\colon C\hookrightarrow E^2$ such that 
  \begin{equation}
  \label{_equation_condition_of_qd_}
  \iota^*(\pr_1^*\L) = \iota^*(\pr_2^*\L).
  \end{equation}
  {\bf Step 2:} Assume by contradiction that the set of 
  quasi-diagonals $\iota\colon C\hookrightarrow E^2$ satisfying 
  \eqref{_equation_condition_of_qd_} is uncountable. Since the 
  Hilbert scheme of curves in $E^2$ contains a countable set of components
  one can find a component there which contains an irreducible subvariety 
  $T\subset \mathrm{Hilb}(E^2)$ such that for each $t\in T$ the 
  corresponding curve $C_t\subset E^2$ satisfies \eqref{_equation_condition_of_qd_}.
  Denote by $X\subset E^2\times T$ the universal family over $T$. 
  Denote by $\alpha\colon X\to E^2$ the projection to the first component of the product.
  Denote by $\pi\colon X\to T$ the projection to the second component of the product.
  Note that the image $\alpha(X)$ cannot be 1-dimensional, because a 1-dimensional subvariety of $E^2$ contains only finitely many irreducible components, whereas our family of quasi-diagonals is uncountable. Since $E^2$ is irreducible, the map $\alpha$ must be surjective.
  Replace $T$ and $X$ by their resolutions of singularities if necessary.
  For each $t\in T$ denote by $X_t$ the preimage $X_t = \pi^{-1}(t)$.
  Finally, denote by $P_i\colon X\to E$ the composition $\pr_i\circ \alpha$.

  By assumption there exists an open subset $U\subset T$ such that
  for any $t\in U$ the image of the map restriction $\alpha_t = \alpha|_{X_t}\colon X_t \to E^2$
  is a quasi-diagonal such that $\alpha_t^*(\pr_1^*\L) = \alpha_t^*(\pr_2^*\L)$.
  This implies that the restriction of the line
  bundle $P_1^*L\otimes P_2^*L^{-1}$ to a general fiber $X_t$ of $\pi$ equals $\OP_{X_t}$.
  Thus, there exists a line bundle $\M$ on $T$ and 
  divisors $E_1,E_2\subset X\setminus \pi^{-1}(U)$ 
  such that one has the following equality:
  \begin{equation}\label{_equation_condition_of_qd_2_}
  P_1^*(\L) \otimes P_2^*(\L^{-1}) = \pi^*\M\otimes \OP_X(E_1-E_2).
  \end{equation}
  {\bf Step 3:} There exist a point $e\in E$ and an integer $M\geqslant
  1$ such that $\L = \OP_E(M e)$. Note that replacing $L$ by $L^{\otimes N}$ 
  and replacing $e$ by $e'$ such that the difference $e-e'$ is $N$-torsion in $E$ we can assume 
  that the preimage of $e$ under $P_1$ and $P_2$ is irreducible in $X$. 
  On the other hand, the equality  \eqref{_equation_condition_of_qd_2_} stays true.
  
  Consider the surjective map onto the curve 
  $P_i\colon X\to E$ and denote by $D_i\subset X$ the divisor $P_i^{-1}(e)$.
  Condition \eqref{_equation_condition_of_qd_2_} implies that there exist a 
  rational map $f\colon X\dashrightarrow \p^1$, divisors $\xi_i\subset X$ 
  and integers $k_i$ for $1\leqslant i\leqslant s$ 
  such that $\pi(\xi_i)$ is a proper subvariety in $T$ and 
  the following equality holds in the free abelian 
group $\Div(X)$ of divisors on $X$:
  \begin{equation}\label{_equation_more_precise_condition_of_qd_}
   M(D_1 - D_2)  - (f^{-1}(0) - f^{-1}(\infty)) = \sum_{i=1}^s k_i \xi_i,
  \end{equation}
  {\bf Step 4:} The divisor $ \sum_{i=1}^s k_i \xi_i$ is not linearly equivalent 
  to $0$ in $\Div(X)$ since otherwise so is $M(D_1 - D_2)$. Since 
  $\OP_X(M(D_1 - D_2) ) = P_1^*(\L) \otimes P_2^*(\L^{-1}) $ we get a contradiction 
  with \ref{lemma: F1 - F2 is not torsion} and \ref{lemma: pullback is torsion}. 

  In addition, the image $\pi(D_i)$ for $i = 1$ or $2$ cannot be a proper subvariety 
  of $T$. Otherwise, there exists a point $t\in T$ such that $P_i(\pi^{-1}(t)) = e$. 
  \ref{_lemma:_map_to_curve_factors_Lemma_}
implies that in this case $P_i$ factors through $\pi$
  i.e. $\alpha$ maps all quasi-diagonals in our family to fibers of $\pr_i\colon E^2\to E$
  which is impossible since all these fibers are not quasi-diagonals. 

\hfill

{\bf Step 5:} 
Passing to a power of the bundle $L$ and translating
by a torsion element of $E$, we can arrange
$D_i$ to be irreducible (Step 3). Since each 
quasi-diagonal projects surjectively to 
both factors of $E\times E$, the divisor $D_i$  
intersects each fiber of $\alpha:\; X\to E^2$.
Therefore, the divisors $D_1, D_2\subset X$
define a multisection of the projection $X\to T$.
This implies that the divisors $D_1$ and $D_2$
do not contain the components $\xi_i$. Therefore,
the equation 
\eqref{_equation_more_precise_condition_of_qd_}  
implies that the function $f$ is equal
to $\infty$ on each of $\xi_i$. 
Applying \ref{_lemma:_map_to_curve_factors_Lemma_}
to the map $f:\; X\to {\Bbb P}^1$, we obtain that
$f$ factors through $\pi:\; X\to T$.
Thus, $M(D_1 - D_2)$ is equal in $\Div(X)$ to the linear combination 
of preimages of proper subvarieties in $T$. 
However, this contradicts the previous 
observation that all components of $D_1$ and $D_2$ are
multisections of $\pi$. We get a contradiction; thus, the
assumption of existence of an uncountable family of
quasi-diagonals was wrong. This completes the proof.
 \endproof

\hfill

\remark
An independent proof of
\ref{_at_most_countably_many_quasi-diagonals_Theorem_}
is obtained by complex-analytic methods in 
Subsection \ref{_Countability_coreduction_Subsection_}.

%%%%%%%%%%%%%%%%%%%%%%%%%%%%%%%%%%%%%%%%%%%%%%%%%%%%%%%%%%%%
\subsection{Open questions about quasi-diagonals}
%%%%%%%%%%%%%%%%%%%%%%%%%%%%%%%%%%%%%%%%%%%%%%%%%%%%%%%%%%%%

Quasi-diagonals are a fascinating subject which is worth
further consideration. Before we close the subject, we
state a few interesting questions and observations.

\hfill

\remark By \ref{_corollary:_line_bundles_for_restricted_qd_Corollary} for all line bundles $L$, except countably many,
there are no restricted quasi-diagonals on $(E,L)$. We showed in Section \ref{section: lines don't give restricted qd}
that indeed many curves which could produce quasi-diagonals fail to do so. Moreover, all our constructions of restricted
quasi-diagonals work for line bundles with the special property.

\hfill

\question
Call a line bundle $L\in \Pic_n(E)$ {\bf commensurable with $\calo(n[e])$}
if $L\otimes \calo(n[e])^{-1}$ is a torsion.
Is it true that there are no restricted quasi-diagonals on
$(E,L)$ when $L$ is not commensurable with $\calo(n[e])$?

\hfill

\claim
For any even $n$ and any $L$ commensurable with $\calo(n[e])$,
there exist infinitely many restricted quasi-diagonals over $(E,L)$ in $E^n$.

\hfill

\proof The quasi-diagonal constructed in \ref{example: restricted quasi-diagonal for even}
is commensurable with $\calo(n[e])$. Then we conclude 
by \ref{_remark_translations_of_qd_by_torsion_}.
\endproof

\hfill

\question
Consider $E^n$, where $n$ is odd, and $E$ does not
admit complex multiplication. Are there any restricted
quasi-diagonals in $E^n$, for some ample line bundle $L$ on $E$?

\hfill

\question
Even in the case $n=3$ and $E = \C/\Z[\zeta]$ we have constructed 
only one example of a restricted quasi-diagonal in $E^3$ 
(\ref{_restricted_quasi-diagonal_for_3_Example_}).
Thus, $E^3$ contains a countable set of restricted quasi-diagonals 
as was shown in \ref{_remark_translations_of_qd_by_torsion_}.
However, one can ask whether $E^3$ contains any restricted quasi-diagonal
which is not a translation of the one constructed in 
\ref{_restricted_quasi-diagonal_for_3_Example_}?

\hfill

\question
Our only example (see \ref{example: non-elliptic quasi-diagonal}) 
of a quasi-diagonal which is not an elliptic 
curve is a singular curve in $E^2$. Moreover, 
all our examples of restricted quasi-diagonals
are elliptic curves. So it is interesting to ask: are there smooth 
non-elliptic quasi-diagonals? And also are there non-elliptic 
restricted quasi-diagonals?

%%%%%%%%%%%%%%%%%%%%%%%%%%%%%%%%%%%%%%%%%%%%%%%%%%%%%%%%%%%%%%%%%%%%%%%%

\section{Bogomolov-Guan manifolds}\label{_section_bg_manifolds}

%%%%%%%%%%%%%%%%%%%%%%%%%%%%%%%%%%%%%%%%%%%%%%%%%%%%%%%%%%%%%%%%%%%%%%%%

%%%%%%%%%%%%%%%%%%%%%%%%%%%%%%%%%%%%%%%%%%%%%%%%%%%%%%%%%%%%%%%%%%%%%%%%
\subsection{Principal toric and elliptic fibrations}
\label{_principal_toric_Subsection_}
%%%%%%%%%%%%%%%%%%%%%%%%%%%%%%%%%%%%%%%%%%%%%%%%%%%%%%%%%%%%%%%%%%%%%%%%

\definition
Let $M$ be a complex manifold equipped with a free holomorphic action
of an elliptic curve $E$. Then $M$ is called
{\bf a principal elliptic fibration}, over the base $M/E$.

\hfill

\remark \label{_remark_Fischer-Grauert_thm_}
Let $\pi:\; M \to X$ be a smooth holomorphic fibration
with fibers isomorphic to a given elliptic curve. Assume
that the monodromy of the corresponding Gauss-Manin local
system is trivial. Then the Fischer-Grauert  theorem
\cite{_Fischer_Grauert_}
implies that $M$ admits a structure of a principal
elliptic fibration over $X$.

\hfill

\definition
{\bf A principal toric bundle} is a complex manifold
equipped with a free holomorphic action
of a compact complex torus.
Just like for elliptic fibrations,
a principal toric bundle $M$ with an action
of $T$ is holomorphically fibered over $M/T$.

\hfill

Geometry and topology of principal toric bundles
were explored at great length in \cite{_Hofer:remarks_}.
Clearly, the set of isomorphism classes of 
principal $T$-bundles over a complex manifold $X$ 
is identified with $H^1(X, {\Bbb T})$, where ${\Bbb T}$ is the
sheaf whose group of sections ${\Bbb T}(U)$ over an
open set $U\subset X$ is the group of holomorphic
maps from $U$ to $T$, considered as a complex Lie group.
Suppose that $\dim_\C T=n$, and let $\Z^{2n}=\Lambda\stackrel \phi\rightarrow \C^n$
be the corresponding lattice. To compute $H^1(X, {\Bbb T})$,
we consider the following exact sequence of sheaves over $X$:
\[
0\to \Z_X^{2n}\stackrel \phi \to \calo_X^n \to {\Bbb T}\to 0.
\]
The corresponding long exact sequence gives
\begin{equation}\label{_exact_Hofer_Equation_}
0\to H^1(X, \Z_X^{2n})\stackrel \phi \to H^{0,1}(X)^n\to
H^1(X, {\Bbb T})\to H^2(X, \Z_X^{2n})\to \dots
\end{equation}
The $2n$ classes in $H^2(X, \Z_X^{2n})$ associated with
a given principal $T$-bundle are called {\bf the Chern
classes} of this $T$-bundle.

\hfill

\example \label{_example_quotient_of_totalizalion_of_line_bundle_}
Let $L$ be a holomorphic line bundle over a complex manifold $X$,
and $\Tot^\circ(L)$ the space of non-zero vectors in its total space,
considered as a $\C^*$-fibration over $X$.
Fix a complex number $\lambda\in \C$, $|\lambda|>1$.
The group $\C^*$ acts on $\Tot^\circ(L)$, transitively on
fibers.  Consider the quotient $\Tot^\circ(L)/\langle\lambda\rangle$;
it is a principal elliptic bundle on $X$, with fibers
elliptic curves isomorphic to $\C^*/\langle\lambda\rangle$.
It is easy to see that the corresponding Chern classes are $(0, c_1(L))$.

\hfill

%%%%%%%%%%%%%%%%%%%%%%%%%%%%%%%%%%%%%%%%%%%%%%%%%%%%%%%%%%%%
\theorem\label{_Blanchard_Kahler_Theorem_}
(Blanchard)\\
Let $\pi:\; M \to X$ be a holomorphic principal toric fibration,
with $X$ K\"ahler. Then $M$ admits a K\"ahler structure
if and only if the Chern classes of $\pi$ vanish in $H^2(X, \Q)$
(equivalently, the Chern classes of $\pi$ are torsion).

\proof
\cite{_Blanchard:complexes_}. \endproof

\hfill

In order to describe the properties of varieties which are going to arise
further, we need the following definition.

\hfill

\definition
A complex variety is called {\bf Fujiki class C}
if it is bimeromorphic to a K\"ahler manifold.
A complex variety is called {\bf Moishezon}
if it is bimeromorphic to a projective manifold.

\hfill

%%%%%%%%%%%%%%%%%%%%%%%%%%%%%%%%%%%%%%%%%%%%%%%%%%%%%%%%%%%%
\theorem\label{_projective_toric_Theorem}
Let $\pi:\; M \to X$, $X=M/T$ be a holomorphic principal toric fibration,
with $X$ and $T$ projective. Assume that the Chern classes
of $\pi$ are torsion. Then $M$ is projective if and 
only if the corresponding class in $H^1(X, {\Bbb T})$
is torsion. 

\hfill

\proof If $M$ is a projective variety then there exists a
multisection of $\pi$, that is,
 an irreducible projective subvariety $Z\subset M$
 such that $\pi(Z) = X$ and the map $\pi|_{Z}\colon Z\to X$ 
 is generically finite.

 Consider the fiber product $M_Z = M\times_X Z$ with the induced 
 principal toric fibration $\pi_Z\colon M_Z\to Z$. This fibration
 admits a global section; thus, it is isomorphic to 
 a direct product $M_Z\cong Z\times T$ where $T$ is the fiber of 
 $\pi$.
 
 The class of the fibration $\pi_Z$ vanishes and equals the image of 
 the class of the fibration $\pi$ under the inverse image map $H^1(X,{\Bbb T})\to H^1(Z,{\Bbb T})$. 
 By \eqref{_exact_Hofer_Equation_}, this
is possible only if the class of the fibration $\pi$ is torsion.

 Note that this proof works for a complex manifold $M$ equipped
with a principal toric fibration: once
 we have a multisection we observe that the inverse image of the fibration is a product.

 In order to prove another implication, we need the following construction.
 Recall that if $\zeta$ is a cocycle in $H^1(X,{\Bbb T})$ and $M_{\zeta}$ is a principal 
 toric bundle associated to $\zeta$ over $X$ then for any positive integer $k$
 there exists the following surjective unramified map:
 \[
 \alpha\colon M_{\zeta}\to M_{k \zeta}.
 \]
 One can define $\alpha$ on local trivializations $U\times T$; we set $\alpha((u,t)) = (u,k\cdot t)$
 and one can easily check that this map is compatible with the cocycle. 

 Now if $\pi\colon M\to X$ is a principal  toric fibration whose $c_1(\pi)$ is torsion,
 and corresponding class $\zeta\in H^1(X,{\Bbb T})$ is torsion then by the above construction
 there exists a surjective unramified map $\alpha\colon M\to X\times T$. By our assumption 
 the product $X\times T$ is projective. Therefore, so is $M$ since the inverse image of an ample 
 line bundle under $\alpha$ is ample on $M$.

 Finally, if $M$ is a Moishezon variety then one can construct a subvariety $Y\subset M$ such that 
 the map $\pi|_{Y}\colon Y\to X$ is generically finite. Thus, $M\times_X Y$ is isomorphic 
 to $Y\times T$. Therefore, the cocycle in $H^1(X,{\Bbb T})$ which defines the fibration $M$ is torsion, following the same logic as in the first part of the proof. 
 We have shown that $M$ is Moishezon if and only if $M$ is projective.
 \endproof

%%%%%%%%%%%%%%%%%%%%%%%%%%%%%%%%%%%%%%%%%%%%%%%%%%%%%%%%%%%%%%%%%%%%%%%%
\subsection{Principal toric fibrations associated with
  holomorphic line bundles}
%%%%%%%%%%%%%%%%%%%%%%%%%%%%%%%%%%%%%%%%%%%%%%%%%%%%%%%%%%%%%%%%%%%%%%%%

The constructions which are given in this subsection
are central in locally conformally K\"ahler geometry,
\cite{_OV:Principles_}; we already used them when we
defined the Kodaira surface. 

\hfill

\definition 
Let $L_1, ..., L_n$ be a collection of line bundles on a
complex manifold $M$,
and $\lambda_1, ..., \lambda_n$ complex numbers, $|\lambda_i|>1$.
Consider the fibered product $\prod_{i=1}^n
\frac{\Tot^\circ(L_i)}{\langle\lambda_i\rangle}$;
it is a principal toric fibration with a fiber
$\prod_i^n\frac{\C^*}{\langle\lambda_i\rangle}$
When $\lambda_1= \lambda_2=\dots =\lambda_n=\lambda$, we denote it
as $R_\lambda(M,L_1, ..., L_n)$.

\hfill

\proposition\label{_prop_pullback_of_line_bundle_}
Consider the projection
$P:\; R_\lambda(M,L_1, ..., L_n)\arrow M$.
Then $P^*(c_1(L_i))=0$.

\hfill

\proof To prove that 
$P^*(c_1(L_i))=0$, it would suffice to show that
$P^*L_i$ has a nowhere vanishing section on $\prod_M S_i$.
This is clear because 
 for each $(t_1, ..., t_n)\in \prod_M S_i$, the point
$t_i$ defines a unit vector in $P^* L_i\restrict{(t_1, ..., t_n)}$.
\endproof

\hfill

\lemma
\label{_Kodaira-type_manifolds_Lemma_} 
Let $L_1$ and $L_2$ be two line bundles on a smooth complex manifold $M$. If $F$ is the elliptic curve $F = \C^*/\langle\lambda\rangle$ with the diagonal action on $R_{\lambda}(M,L_1,L_2)$, then one has the following isomorphism:
\[
R_{\lambda}(M,L_1,L_2)/F\cong R_{\lambda}(M, L_1\otimes L_2^{-1}).
\]

\hfill

\proof
The lemma is proved in \cite[Lemma 6.4]{_BKKY_}, we repeat the proof here since 
it is easy and important. 

We further denote by $\mathcal{L}$ the line bundle $\mathcal{L} = L_1\otimes L_2^{-1}$. Let $\Phi$ be the following map:
\begin{align*}
\Phi\colon \Tot^\circ (L_1)\times_M \Tot^\circ (L_2)\to \Tot^\circ (\mathcal{L}); && \Phi(p, s_1,s_2) = (p, s_1\otimes s_2^{\vee}).
\end{align*}
Here $p$ is a point in $M$ and $s_i$ is an element in $\Tot^\circ (L_i)$ and $s_2^{\vee}$ is the unique linear operator on $\Tot(L_2)$ such that $s_2^{\vee}(s_2) = 1$. One can check that $\Phi$ extends to a surjective map $R_{\lambda}(M,L_1,L_2)\to R_{\lambda}(M,\mathcal{L})$. Moreover, the map $\Phi$ factors through the diagonal action of $F$ and the induced map $R_{\lambda}(M,L_1,L_2)/F\to R_{\lambda}(M,\mathcal{L})$ is an isomorphism.
\endproof

\hfill

\corollary\label{_Kodaira-type_manifolds_corollary_}
Let $L_1,L_2,\dots, L_n$ be $n$ line bundles on a smooth complex manifold $M$. If $F$ is the elliptic curve $F = \C^*/\langle\lambda\rangle$ with the diagonal action on $R_{\lambda}(M,L_1,L_2,\dots,L_n)$, then one has the following isomorphism:
\[
R_{\lambda}(M,L_1,\dots, L_n)/F\cong R_{\lambda}(M, L_2\otimes L_1^{-1})\times_M\dots\times_M R_{\lambda}(M, L_n\otimes L_1^{-1}).
\]

\hfill

\proof
See \cite[Lemma 6.6]{_BKKY_}.
\endproof

%%%%%%%%%%%%%%%%%%%%%%%%%%%%%%%%%%%%%%%%%%%%%%%%%%%%%%%%%%%%%%%%%%%%%%%%

%%%%%%%%%%%%%%%%%%%%%%%%%%%%%%%%%%%%%%%%%%%%%%%%%%%%%%%%%%%%%%%%%%%%%%%%
\subsection{Hilbert schemes}
%%%%%%%%%%%%%%%%%%%%%%%%%%%%%%%%%%%%%%%%%%%%%%%%%%%%%%%%%%%%%%%%%%%%%%%%

For definition, basic properties and other results on
Hilbert scheme of points on a complex surface, see \cite{_Nakajima:Hilbert_}.

\hfill

\definition
A {\bf Hilbert scheme of points} on a variety $M$
is the Hilbert space of ideal sheaves $I\subset \calo_M$
with $F:=\calo_M/I$ supported in a finite subset of $M$; dimension of $H^0(F)$
is called {\bf length}. The Hilbert scheme of points
for length $n$ is denoted by $M^{[n]}$.

\hfill

\remark When $M$ is a complex surface, $M^{[n]}$
is a smooth resolution of the $n$-th symmetric power
of $M$ (\cite{_Nakajima:Hilbert_}).

\hfill

\remark If the surface $M$ is holomorphically symplectic,
$M^{[n]}$ is also holomorphically symplectic (this
observation, made in \cite{_Beauville_}, easily follows 
from Serre duality).

\hfill

\remark
The Hilbert scheme of a K3 surface is a simply connected, 
holomorphically symplectic manifold. The Hilbert scheme
of a torus $T$ is not simply connected, but the fiber
of its Albanese map $T^{[n]}\arrow T$ 
has finite fundamental group. The universal cover
of this fiber is called {\bf a generalized Kummer variety}.

\hfill

This way one obtains the two main examples of simply connected
holomorphically symplectic projective manifolds. The third
series of examples, associated with the Hilbert scheme of Kodaira
surface, is non-projective and non-K\"ahler.

%%%%%%%%%%%%%%%%%%%%%%%%%%%%%%%%%%%%%%%%%%%%%%%%%%%%%%%%%%%%%%%%%%%%%%%%
\subsection{Kodaira surface}
%%%%%%%%%%%%%%%%%%%%%%%%%%%%%%%%%%%%%%%%%%%%%%%%%%%%%%%%%%%%%%%%%%%%%%%%

\definition\label{_definition_Kodaira_}
Let $L$ be a line bundle
on an elliptic curve $E$ with the first Chern class
$c_1(L)\neq 0$. Denote by $\tilde S$ the corresponding $\C^*$-bundle
on $E$ obtained by removing the zero section,
$\tilde S=\Tot(L) \backslash 0$. Fix a complex
number $\lambda$ with $|\lambda| > 1$, and
let $h_\lambda:\; \tilde S \arrow \tilde S$
be the corresponding homothety of $\tilde S$.
The quotient $\tilde S/\langle h_\lambda\rangle$
is called {\bf a primary Kodaira surface},
or {\bf Kodaira-Thurston surface}, or
{\bf Kodaira surface}.

\hfill

\remark \label{_remark_Kodaira_is_elliptic_fibration_}
The Kodaira surface is a principal elliptic fibration over 
the elliptic curve $E$, with the fiber
identified with the elliptic curve $E_L:\;= \C^*/\langle \lambda\rangle$.
Therefore, it is holomorphically symplectic.

\hfill

\remark
{\bf The secondary Kodaira surfaces} are obtained from
the primary Kodaira surfaces as quotients by a free
holomorphic action of a finite group. These surfaces
are elliptically fibered over $\C P^1$; they are not
holomorphically symplectic.

\hfill

\remark \label{remark_DB-morphism_is_projective_}
Though $\Kod$ is not a projective surface, one still can define its 
Hilbert scheme of points $\Kod^{[n]}$. Denote by $\Kod^{(n)}$
the $n$-th symmetric power of $\Kod$ i.e. the quotient of the product 
$\Kod^n$ by the natural action of the symmetric group $\SG_n$. 
Note that the symmetric power $\Kod^{(n)}$ is a singular variety.
By  \cite[Theorem 2.3.1]{_Cataldo_Migliorini:Douady_space_} there exists 
the Douady--Barlet morphism $\Kod^{[n]}\to \Kod^{(n)}$ which resolves 
the singularity of $\Kod^{(n)}$. Moreover, this morphism
is projective \cite[Theorem 2.3.1]{_Cataldo_Migliorini:Douady_space_}.

By \ref{_remark_Kodaira_is_elliptic_fibration_} the Kodaira surface 
is a principal elliptic fibration with a fiber $E_L$.
Since the action of $E_L$ on $\Kod^{[n]}$ and $\Kod^{(n)}$ is equivariant 
with respect to the Douady--Barlet morphism, the induced morphism 
$\Kod^{[n]}/E_L\to \Kod^{(n)}/E_L$ is also projective.

%%%%%%%%%%%%%%%%%%%%%%%%%%%%%%%%%%%%%%%%%%%%%%%%
\subsection{ Nilmanifolds }
\label{_Nilmanifold_Subsection_}
%%%%%%%%%%%%%%%%%%%%%%%%%%%%%%%%%%%%%%%%%%%%%%%%

For some proofs given at the end of this paper, we need to 
do some explicit computations in the de Rham algebra of a power of 
Kodaira surface. This is most easy to do if we interpret the Kodaira
surface as a nilmanifold
(\cite{_Hasegawa_,_OV:Principles_}). 
Here we introduce this notion
and some basic results, mostly due to A. Maltsev (\cite{_Maltsev_});
for proofs and more details, see \cite{_Corwin_Greenleaf_}.

\hfill

\definition
Let $M$ be a compact manifold equipped
with a transitive action of a nilpotent Lie group.
Then $M$ is called {\bf a nilmanifold}.

\hfill

\remark
As shown by A. Maltsev, all nilmanifolds are obtained as quotient spaces,
$M=G/\Gamma$, where $\Gamma\subset G$ is a discrete,
cocompact subgroup, acting by translations.

\hfill

\theorem  (Maltsev)\\ \nopagebreak
Let $\g$ be a nilpotent Lie algebra defined over $\Q$,
and $G$ its Lie group. Then $G$ contains a discrete
subgroup $\Gamma$ such that $G/\Gamma$ is compact, and
$\Gamma= e^{\Gamma_\g}$, where $\Gamma_\g=\Z^n$ is a lattice
 in $\g$.
Finally, all nilmanifolds are obtained this way.

\proof \cite{_Maltsev_}. \endproof

\hfill

\remark Topologically, all simply connected
nilpotent Lie groups are diffeomorphic to $\R^n$, and all nilmanifolds are 
iterated circle fibrations.

%%%%%%%%%%%%%%%%%%%%%%%%%%%%%%%%%%%%%%%%%%%%%%%%%%%%%%%%%%%%%%%%%%%%%%%%
\subsection{Kodaira surface as a nilmanifold}
\label{_Kod_nilma_Subsection_}
%%%%%%%%%%%%%%%%%%%%%%%%%%%%%%%%%%%%%%%%%%%%%%%%%%%%%%%%%%%%%%%%%%%%%%%%

%%%%%%%%%%%%%%%%%%%%%%%%%%%%%%%%%%%%%%%%%%%%%%%%%%%%%%%%%%%%
\definition\label{_integra_comple_on_Lie_Definition_}
{\bf An integrable complex structure} on a real Lie algebra
$\g$ is a subalgebra $\g^{1,0}\subset \g \otimes_\R \C$
such that $\g^{1,0}\oplus \overline{\g^{1,0}} = \g \otimes_\R \C$.

\hfill

\remark
Any such decomposition defines a complex structure $I$
on $\g$ by $I\restrict{\g^{1,0}}=\1$ and
$I\restrict{\g^{0,1}}=-\1$.
We extend this operator to a right-invariant almost
complex structure operator ${\cal I}$ on $G$.
Integrability of ${\cal I}$ is equivalent to $[T^{1,0}G, T^{1,0}G]\subset T^{1,0}G$,
which is equivalent to $[\g^{1,0}, \g^{1,0}]\subset \g^{1,0}$.

\hfill

\remark
Right-invariant complex structures on a connected real Lie group
are in 1 to 1 correspondence with integrable
complex structures on its Lie algebra.

\hfill

\definition
Let $\Gamma\subset G$ be a cocompact lattice in a nilpotent Lie group.
{\bf A complex nilmanifold} is a quotient
$M:=G/\Gamma$ equipped with a complex structure, in such a way
that $G$ has a right-invariant complex structure,
and the projection $G \arrow M$ is holomorphic.

\hfill

Let $G:= \R \times G_0$, where $G_0$ is the 3-dimensional 
real Lie group of 3x3 upper triangular matrices.
This group contains many cocompact lattices, for example $\Gamma :=\Z \times \Gamma_0$,
where $\Gamma_0$ is the group of strictly upper triangular 
integral matrices.  The corresponding
Lie algebra $\g$ is generated by $x,y, z, t$
with the only non-zero commutator $[x, y]=z$.

\hfill

\claim
The Kodaira surface can be equivalently 
defined as $M:=G/\Gamma$ with the
complex structure defined by the subalgebra
$\g^{1,0}:= \langle x+ \1 y, z+\1 t\rangle\subset
\g\otimes_\R \C$, 
where $G= \R \times G_0$ is the group
defined above, $\g=\Lie(G)$, and $\Gamma\subset G$ is a cocompact lattice.

\proof
\cite{_Hasegawa_}
\endproof

%%%%%%%%%%%%%%%%%%%%%%%%%%%%%%%%%%%%%%%%%%%%%%%%%%%%%%%%%%%%
\subsection{Bogomolov-Guan manifolds }\label{_section_BG_construction_}
%%%%%%%%%%%%%%%%%%%%%%%%%%%%%%%%%%%%%%%%%%%%%%%%%%%%%%%%%%%%

\definition
Let $\Kod$ be a Kodaira-Thurston surface,
 $\Kod^{[n]}$ its Hilbert scheme, $\Kod^{(n)}$ its symmetric power. 
 Let $\pi:\; \Kod \arrow E$
be the elliptic fibration constructed above, $E_1$ its fiber,
and $\pi^{(n)}\colon \Kod^{(n)}\to E^{(n)}$ the 
induced map between the symmetric powers.
Denote by $r:\; \Kod^{[n]} \arrow \Kod^{(n)}$ 
the resolution map.
Recall that $\sigma\colon E^{(n)}\to E$ denotes
the map that sums up $n$ elements in~$E$.
Then the following composition is an isotrivial fibration:
\begin{equation*}
p = \sigma\circ\pi^{(n)}\circ r\colon \Kod^{[n]}\to E.
\end{equation*}
Denote a fiber of $p$ over the zero point $e\in E$  by $F$.
Then $F$ is a smooth divisor in 
a holomorphically symplectic manifold $(\Kod^{[n]}, \Omega)$.
The restriction of $\Omega$ to $F$ has rank 
$2n-2$, because $F\subset \Kod^{[n]}$ is a divisor.
Denote by $K\subset TF$ the kernel of $\Omega\restrict {F}$,
that is, the set of all $x\in TF$ such that 
$\Omega\restrict F(x, \cdot)=0$.

\hfill

\remark
The corresponding foliation is called {\bf
the characteristic foliation}; this definition is valid
for any divisor $D$ in a holomorphically symplectic manifold,
and the characteristic foliations have interesting
algebraic geometry indicative of the Kodaira dimension
of the line bundle $\calo(D)$, see
\cite{_Amerik_Campana:Characteristic_,_Amerik_Guseva_,_Abugaliev_,_Anella_Huybrechts:Characteristic_}.

\hfill

\remark Note that the diagonal action of $E_1$ on $\Kod^{(n)}$ 
induces the action on $\Kod^{[n]}$ and restricts to the action on $F$.
The form $\Omega$ is invariant under the action of $E_1$. Thus, 
the leaves of the characteristic foliation $K$ are in fact the orbits 
of the action of $E_1$.

\hfill

\remark \label{remark: definition of BG}
The leaf space $W$ of $K$ 
is a holomorphically symplectic 
orbifold, it is isomorphic to the quotient manifold 
$F/E_1$, but it is never smooth.
When the degree of the line bundle $L$ over $E$ is
divisible by $n$,
the space $W$ has a smooth finite covering, of order
$n^2$, ramified in the singular points of $W$.
This covering is called 
{\bf the Bogomolov-Guan manifold}, we denote it $BG$. 
By construction, it is compact, simply connected,
holomorphically symplectic. The construction of $BG$
can be described with the following diagram:
% https://q.uiver.app/#q=WzAsOCxbMywwLCJcXG1hdGhybXtLb2R9Xntbbl19Il0sWzMsMSwiRV57KG4pfSJdLFszLDIsIkUiXSxbMiwyLCJ4Il0sWzIsMSwiXFxtYXRoYmJ7UH1ee24tMX0iXSxbMiwwLCJGIl0sWzEsMCwiVyJdLFswLDAsIkJHIl0sWzAsMSwiXFxwaV57KG4pfVxcY2lyYyByIl0sWzEsMiwiXFxTaWdtYSJdLFszLDIsIiIsMix7InN0eWxlIjp7InRhaWwiOnsibmFtZSI6Imhvb2siLCJzaWRlIjoidG9wIn19fV0sWzQsM10sWzQsMSwiIiwyLHsic3R5bGUiOnsidGFpbCI6eyJuYW1lIjoiaG9vayIsInNpZGUiOiJ0b3AifX19XSxbNSw0XSxbNSwwLCIiLDAseyJzdHlsZSI6eyJ0YWlsIjp7Im5hbWUiOiJob29rIiwic2lkZSI6InRvcCJ9fX1dLFs1LDYsIkVfMSIsMl0sWzcsNiwicSJdLFs2LDRdLFs3LDQsIlAiLDJdXQ==
\[\begin{tikzcd}
	BG & W & F & {\mathrm{Kod}^{[n]}} \\
	&& {\mathbb{P}^{n-1}} & {E^{(n)}} \\
	&& e & E
	\arrow[from=1-1, to=1-2]
	\arrow["P"', from=1-1, to=2-3]
	\arrow[from=1-2, to=2-3]
	\arrow["{E_1}"', from=1-3, to=1-2]
	\arrow[hook, from=1-3, to=1-4]
	\arrow[from=1-3, to=2-3]
	\arrow["{\pi^{(n)}\circ r}", from=1-4, to=2-4]
	\arrow[hook, from=2-3, to=2-4]
	\arrow[from=2-3, to=3-3]
	\arrow["\sigma", from=2-4, to=3-4]
	\arrow[hook, from=3-3, to=3-4]
\end{tikzcd}\]

\hfill

\remark
\label{_remark_base_of_BG}
The action of $E_1$ described above commutes with the map $(\pi^{(n)}\circ r)|_{F}\colon F\to E^{(n)}$; thus, this map factors through the leaf space $W$.
Since $BG$ is a finite cover of $W$ we get a map $P\colon BG\to E^{(n)}$. 
The image of this map is the quotient variety $X/{\SG_{n}}$, where
\[
X= \left\{(x_1, ..., x_{n})\in E^{n}\ \  \left|\ \  \sum_i x_i =e\right.\right\} \cong E^{n-1},
\]
and $\SG_{n}$ is the symmetric group naturally acting on $X\subset E^{n}$. 
One can check that $X/{\SG_{n}}\cong\p^{n-1}$; therefore, the image of $P$ is the divisor $\p^{n-1}\subset E^{(n)}$.
By \cite[Theorem A]{_BKKY_} the map $P$ is an algebraic reduction of $BG$.

\hfill

\remark Since the  Bogomolov-Guan manifold
contains a blown-up Kodaira surface,
it is non-K\"ahler.

\hfill

\remark \label{_remark_subvarieties_in_BG_and_Kod_mod_E1_}
 The diagonal action of $E_1$ on $\Kod^n$ commutes with the projection $\pi^n\colon \Kod^n\to E^n$. 
 Consider the induced map $\pi^n/E_1\colon \Kod^n/E_1\to E^n$ and denote by $M$ the preimage 
 $M = (\pi^n/E_1)^{-1}(X)$ where $X\subset E^n$ is defined in \ref{_remark_base_of_BG}.
 Then the construction induces a generically finite meromorphic map $\tau\colon M\to W$. 
  More precisely, the map 
 $\tau$ factors through  the quotient by the symmetric group $\SG_n$ and then the resolution of its singularities:
% https://q.uiver.app/#q=WzAsOCxbMSwwLCJXIl0sWzMsMCwiTSJdLFs0LDAsIlxcS29kXm4vRV8xIl0sWzIsMSwiTS9cXFN5bV9uIl0sWzAsMCwiQkciXSxbMywyLCJYIl0sWzQsMiwiRV5uIl0sWzIsMiwiXFxwXntuLTF9Il0sWzEsMiwiIiwwLHsic3R5bGUiOnsidGFpbCI6eyJuYW1lIjoiaG9vayIsInNpZGUiOiJ0b3AifX19XSxbMSwzXSxbMCwzXSxbMSwwLCJcXHRhdSIsMix7InN0eWxlIjp7ImJvZHkiOnsibmFtZSI6ImRhc2hlZCJ9fX1dLFs0LDBdLFs1LDYsIiIsMCx7InN0eWxlIjp7InRhaWwiOnsibmFtZSI6Imhvb2siLCJzaWRlIjoidG9wIn19fV0sWzEsNSwiKFxccGlebil8X00iXSxbMiw2LCJcXHBpXm4iXSxbNSw3XSxbMyw3XV0=
\[\begin{tikzcd}
	BG & W && M & {\Kod^n/E_1} \\
	&& {M/\SG_n} \\
	&& {\p^{n-1}} & X & {E^n}
	\arrow[from=1-1, to=1-2]
	\arrow["Q", from=1-2, to=2-3]
	\arrow["\tau"', dashed, from=1-4, to=1-2]
	\arrow[hook, from=1-4, to=1-5]
	\arrow[from=1-4, to=2-3]
	\arrow["{(\pi^n/E_1)|_M}", from=1-4, to=3-4]
	\arrow["{\pi^n/E_1}", from=1-5, to=3-5]
	\arrow[from=2-3, to=3-3]
	\arrow[from=3-4, to=3-3]
	\arrow[hook, from=3-4, to=3-5]
\end{tikzcd}\]
Note that by \ref{remark_DB-morphism_is_projective_} the morphism $Q\colon W\to M/\SG_n$ is projective.
 Consider now a subvariety $A\subset BG$, denote its image in $W$ by $B$ and denote by $\widetilde{A}$ the proper 
 preimage of $B$ in $M$. Since the map $BG\to W$ is finite then $A$ is Moishezon (resp. Fujiki class C) if and only if 
 so is $B$ and its image in $M/\SG_n$. Since $\tau$ is projective $B$ is  Moishezon (resp. Fujiki class C) if and only if 
 so is $\tau(B)$. Finally, since the map $M\to M/\SG_n$ is finite $\tau(B)$ is  Moishezon (resp. Fujiki class C) if and only if 
 so is $\widetilde{A}$.

%%%%%%%%%%%%%%%%%%%%%%%%%%%%%%%%%%%%%%%%%%%%%%%%%%%%%%%%%%%%%%%%%%%%%%%%
\subsection{Subvarieties of Bogomolov-Guan manifolds }
%%%%%%%%%%%%%%%%%%%%%%%%%%%%%%%%%%%%%%%%%%%%%%%%%%%%%%%%%%%%%%%%%%%%%%%%

\remark
Since the projection of the Kodaira surface to $E$
is Lagrangian, the map $P\colon  BG\arrow \p^{n-1}$ is a Lagrangian fibration.

\hfill

\remark
Let $E_1, E$ be elliptic curves,
and  $\Kod$ a Kodaira surface which is a principal $E_1$-bundle over $E$.
Therefore, a general fiber of $P$ is $E_1^{n-1}$, and every fiber of $P$ is a projective 
variety of the form $E_1^{n-1}/G_x$ for a finite group $G_x\subset\Aut(E_1^{n-1})$; in other 
words, all fibers of $P$
are finite quotients of $E_1^{n-1}$ (not resolutions of such quotients). 

\hfill

The following results were obtained (in a different form)
in \cite{_BKKY_}.

\hfill

%%%%%%%%%%%%%%%%%%%%%%%%%%%%%%%%%%%%%%%%%%%%%%%%%%%%%%%%%%%%
\theorem \label{_BKKY_main_Theorem_}
Let $A\subset BG$ be an irreducible complex subvariety,
and let  $P:\;  BG\arrow \p^{n-1}$ be the Lagrangian
fibration. Assume that $P(A)$ is positive-dimensional. Consider
the $\SG_{n}$-quotient map $\sigma:\; X\cong E^{n-1} \arrow \p^{n-1}$. 
Then the following assertions hold:
\begin{enumerate}
    \item[\textup{(1)}]  the manifold $A$ is Moishezon 
if and only if each irreducible component $Z$
of $\sigma^{-1}(P(A))$ is a restricted quasi-diagonal;
    \item[\textup{(2)}] the manifold $A$
is Fujiki class C if each irreducible component $Z$
of $\sigma^{-1}(P(A))$ is a curve in $E^{n-1}$
such that all projections $p_i:\; Z\arrow E$ have the same
degree.
    \item[\textup{(3)}] Whenever a subvariety of
 a Bogomolov-Guan manifold is Moishezon, it is
also projective.
\end{enumerate}
    
\proof 
Denote by $\widetilde{A}$ the proper preimage of $A$ in the quotient variety
$\Kod^n/E_1$. By \ref{_remark_subvarieties_in_BG_and_Kod_mod_E1_} the variety 
$A$ is Moishezon (resp. Fujiki class C) if and only if so is $\widetilde{A}$.

If $\dim(\sigma^{-1}(P(A)))\geqslant 2$, we can assume that $p_1\times p_2\colon Z\to E^2$ is surjective.
By \cite[Lemma 6.12]{_BKKY_} there exists a surjective map $\widetilde{A}\to \Kod^2/E_1$.
Since $\Kod^2/E_1$ is not Fujiki class C, then so are $\widetilde{A}$ and $A$.

Assume that $Z$ is an irreducible curve in $ \sigma^{-1}(P(A))\subset E^n$. 
Then its preimage $X$ in $\Kod^n/E_1$ is isomorphic to the fiber
product of $n-1$ principal elliptic bundles over $Z$ by \cite[Lemma 6.6]{_BKKY_}:
\[
X = X_1\times_{Z} X_2\times_Z\dots\times_Z X_{n-1}.
\]
Each bundle $\pi_i\colon X_i\to Z$ is equal to the quotient of the total space of 
the line bundle $p_i^*L\otimes p_1^*L^{-1}$ by  $\langle\lambda\rangle$.

If all projections $p_i:\; Z\arrow E$ 
do not have the same degree then one can find a surface $X_i$ 
which is not Fujiki class C by \cite[Lemma 6.3]{_BKKY_}.
Moreover, all subvarieties of $X_i$ are fibers over $Z$; thus, $A\subset X$ 
projects surjectively to $X_i$ so it is not Fujiki class C. 

If each $Z$ is a curve in $E^{n-1}$
such that all projections $p_i:\; Z\arrow E$ have the same
degree then in view of \ref{_example_quotient_of_totalizalion_of_line_bundle_} and \ref{_Blanchard_Kahler_Theorem_}
the preimage of each $Z$ is Fujiki class C; thus $A$ is also.

If $Z$ is not a quasi-diagonal then there exists $i$ such that 
$X_i$ is not Moishezon by \ref{_projective_toric_Theorem}. 
Thus, the algebraic dimension of $X_i$ equals $1$ and it contains no 
multisections. Therefore, $A$ projects surjectively to $X_i$ so $A$ is 
not Moishezon itself by \cite[Lemma 2.9]{_BKKY_}.

Finally, if $Z$ is a quasi-diagonal then $X$ is projective by
\ref{_example_quotient_of_totalizalion_of_line_bundle_} and
\ref{_projective_toric_Theorem}. Therefore, $A$ is also projective. 

Note also that if $Z$ is a quasi-diagonal and lies in 
$\sigma^{-1}(P(A))$ then necessarily it is a restricted quasi-diagonal.
This finishes the proof.
\endproof

%%%%%%%%%%%%%%%%%%%%%%%%%%%%%%%%%%%%%%%%%%%%%%%%%%%%%%%%%%%%%%%%%%%%%%%%

\section{Bogomolov-Guan precursors}\label{_section_bg_precursors}

%%%%%%%%%%%%%%%%%%%%%%%%%%%%%%%%%%%%%%%%%%%%%%%%%%%%%%%%%%%%%%%%%%%%%%%%

%%%%%%%%%%%%%%%%%%%%%%%%%%%%%%%%%%%%%%%%%%%%%%%%%%%%%%%%%%%%
\subsection{Bogomolov-Guan precursor}
%%%%%%%%%%%%%%%%%%%%%%%%%%%%%%%%%%%%%%%%%%%%%%%%%%%%%%%%%%%%

\definition
Let $E_1, E$ be elliptic curves, and 
$\pi\colon \Kod\to  E$ a Kodaira surface.
Denote by $D$ the fiber of the composition
$\left(\sigma\circ\pi^n\right)\colon \Kod^n\to E$ over the zero point $e$.
Consider the diagonal $E_1$-action on $D$.
{\bf The restricted Bogomolov-Guan precursor}
is the quotient $X = D/E_1$, and
{\bf the unrestricted Bogomolov-Guan precursor}
is $\Kod^n/E_1$.

\hfill

\hfill

\proposition
 The orbits of the $E_1$-action on $D$ are leaves of
  the characteristic foliation $\ker \Omega$ on $D$.

\hfill

\proof
Let $\sigma:\; E^{n} \arrow E$ be the function
$(z_1,..., z_{n})\to \sum_{i=1}^{n} z_i$. Then 
$\ker d\sigma= TD$, where $d\sigma$ is a 1-form obtained as a
derivative of $\sigma$, which is locally defined as a holomorphic 
map to $\C$. Locally in $\Kod$, we choose
a coordinate system $v, w$ such that $v$ is the
coordinate on the base of $p$ and $w$ the coordinate on the fiber.
By construction, the symplectic form $\Omega$
on $\Kod$ pairs the coordinate vector field on the fiber of $p$ with the
coordinate vector field on the base: locally on $\Kod$,
its holomorphically symplectic form is $dv\wedge dw$. Therefore, $d\sigma$ is
proportional to $\Omega(W, \cdot)$, where $W= \sum w_i$.
This implies that the characteristic
  foliation of $D$ is tangent
to $W$, and hence tangent to the diagonal $E_1$-action.
\endproof

\hfill

\remark The space of leaves of the characteristic foliation
is clearly holomorphically symplectic; this implies, in particular, the following fact.

\hfill

\corollary The restricted Bogomolov-Guan precursor $D/E_1$
is holomorphically symplectic,
and the natural map $P:\; D/E_1 \arrow X\subset E^n$ 
to the divisor $X=\{(z_1, ...,z_{n})\in E^{n}\ \ |\ \ \sum z_i =e\}$
is a Lagrangian fibration.

\hfill

\claim
The Bogomolov-Guan manifold admits a 
holomorphic, bimeromorphic map 
to a finite quotient of the restricted Bogomolov-Guan precursor.

\hfill

\proof
Indeed, $F/E_1$ is a finite quotient of the Bogomolov-Guan manifold,
and it is bimeromorphic to $\frac{D/E_1}{\SG_{n}}$.
\endproof

\hfill

\claim The Bogomolov-Guan precursor (restricted and
unrestricted) is a complex
nilmanifold.

\hfill

\proof This observation follows immediately from the construction
given in Subsection \ref{_Kod_nilma_Subsection_}. \endproof

%%%%%%%%%%%%%%%%%%%%%%%%%%%%%%%%%%%%%%%%%%%%%%%%%%%%%%%%%%%%
\subsection{BG-precursors and principal elliptic
  fibrations}
%%%%%%%%%%%%%%%%%%%%%%%%%%%%%%%%%%%%%%%%%%%%%%%%%%%%%%%%%%%%

%%%%%%%%%%%%%%%%%%%%%%%%%%%%%%%%%%%%%%%%%%%%%%%%%%%%%%%%%%%%
\theorem \label{_diagonal_fibration_Theorem_}
Let $L$ be an ample line bundle on an elliptic
curve, and $\Kod= \Tot^\circ(L)/\langle\lambda\rangle$
a Kodaira surface associated with $(E,L)$. 
Let $p_i:\; E^{n+1} \arrow E$ be the projection to $i$-th
component, ${\cal L}_i:= p_i^*(L)\otimes p_{n+1}^*(L)^{-1}$, and 
$B\subset E^{n+1}$ the divisor $\{(z_1, ...,z_{n+1})\in E^{n+1}\ \ |\ \ \sum z_i =0\}$.
Then the restricted Bogo\-mo\-lov-\-Guan precursor associated with $\Kod$
is naturally isomorphic to 
$R_\lambda(B,{\cal L}_1, {\cal L}_2, ..., {\cal L}_n)$,
and the unrestricted Bogo\-mo\-lov-\-Guan precursor to
$R_\lambda(E^{n+1},{\cal L}_1, {\cal L}_2, ..., {\cal L}_n)$.

\hfill

\proof See 
\ref{_Kodaira-type_manifolds_corollary_}.
\endproof

\hfill

\theorem \label{_Proof_for_class_C_Theorem_}
Let $P:\; X\arrow E^{n+1}$ be the standard fibration
on the unrestricted Bogomolov-Guan precursor, and $Z\subset X$
an irreducible Fujiki class C subvariety.
Then $\dim P(Z)\leq 1$, and
if $\dim P(Z)=1$, then it is a curve
such that all projections $p_i:\;  P(Z)\arrow E$
have the same degree. The converse is also true:
 if $P(Z)$ satisfies these assumptions, 
then $Z$ is Fujiki class C. Moreover, if $Z$ is 
smooth Fujiki class C subvariety in $X$ then it is K\"ahler.

\hfill

\pstep 
Let $\omega_i:= p_i^* \omega$, where $\omega$ is
the K\"ahler class of $E$ which satisfies $\int_E \omega=1$. 
The Chern classes of ${\cal L}_1, {\cal L}_2,{\cal L}_3, ..., {\cal L}_n$ are
proportional to $\omega_i-\omega_{n+1}$.
By \ref{_prop_pullback_of_line_bundle_}, $P^*(\omega_i-\omega_{n+1})=0$
for any $i=1, ..., n$. 
If $Z$ is K\"ahler or Fujiki class C, this implies that
$\omega_i-\omega_{n+1}\restrict{P(Z)}=0$, because
for surjective holomorphic maps of K\"ahler manifolds the pullback
map is injective on rational cohomology
(\cite[p. 177]{_Voisin-Hodge_I_}).

\hfill

{\bf Step 2:} Suppose that $Z$ is Fujiki class C.
By Step 1, $\omega_i-\omega_{j}\restrict{P(Z)}=0$ for all $i, j$.
If $\dim P(Z)>1$, its $i$-th projection to $E$ has
positive-dimensional fiber $S$ for some $i$,
hence $\omega_i \restrict S=0$.
Then $\omega_i-\omega_{j}\restrict{P(Z)}=0$
implies that $\omega_i \restrict S=0$ for all $i$,
which is impossible because $\sum_i \omega_i$ is K\"ahler.

\hfill

{\bf Step 3:} If $P(Z)$ is a curve,
$\omega_i-\omega_{j}\restrict{P(Z)}=0$ implies that
$\int_{P(Z)}\omega_i = \int_{P(Z)}\omega_j$; this number
is equal to the degree of the map $p_i:\;  P(Z)\arrow E$.

\hfill

{\bf Step 4:} It remains to prove that $Z$ is
Fujiki class C if $M:=P(Z)$ is a curve and all projections 
$p_i:\;  P(Z)\arrow E$ have the same degree.
Then $P^{-1}(M)= R_\lambda(M, V_1, ..., V_n)$
where all $V_i$ are line bundles with $c_1(V_i)=0$.
Then $R_\lambda(M, V_1, ..., V_n)$ is Fujiki class C 
by Blanchard theorem  (\ref{_Blanchard_Kahler_Theorem_}).

\hfill

{\bf Step 5:} If $Z$ is a smooth subvariety in $X$ then so is $P(Z)\subset E^{n}$.
Thus, by Blanchard theorem (\ref{_Blanchard_Kahler_Theorem_}) and our previous
argument, we observe that $Z$ is a smooth subvariety in the K\"ahler manifold
$P^{-1}(P(Z)) = R_\lambda(M, V_1, ..., V_n)$. Therefore, $Z$ is K\"ahler.
\endproof

%%%%%%%%%%%%%%%%%%%%%%%%%%%%%%%%%%%%%%%%%%%%%%%%%%%%%%%%%%%%%%%%%%%%%%%%
\subsection{Complex curves in the Bogomolov-Guan precursor}
%%%%%%%%%%%%%%%%%%%%%%%%%%%%%%%%%%%%%%%%%%%%%%%%%%%%%%%%%%%%%%%%%%%%%%%%

The main result of this section is the following theorem.

\hfill

%%%%%%%%%%%%%%%%%%%%%%%%%%%%%%%%%%%%%%%%%%%%%%%%%%%%%%%%%%%%
\theorem\label{_proj_then_quasidiag_Theorem_}
Let $X=\Kod^n/E_1$ be an unrestricted Bogomolov-Guan precursor, and
let $P$ be its projection to the base $E^n$.
Consider an irreducible subvariety $Z\subset X$
 such that $P(Z)$ is a curve. Then 
$Z$ is projective if and only if $P(Z)$ is a
quasi-diagonal.

\hfill

%%%%%%%%%%%%%%%%%%%%%%%%%%%%%%%%%%%%%%%%%%%%%%%%%%%%%%%%%%%%
\remark\label{_proj_dim=1_base_Remark_}
If $\dim P(Z)> 1$, $Z$ cannot be Fujiki class C or Moishezon 
by \ref{_Proof_for_class_C_Theorem_}.
If $\dim P(Z)=0$, the variety $Z$ is projective because all fibers of $P$
are projective. This is why we care only about the case
$\dim P(Z)=1$.

\hfill

\noindent{\bf Proof of \ref{_proj_then_quasidiag_Theorem_}.} {\bf Step 1:}
For any curve $P(Z)$, its preimage in $X$ is identified
with $R_\lambda(M, V_1, ..., V_n)$ by \ref{_diagonal_fibration_Theorem_}.
Let $M$ be a complex curve.
We are going to prove that $R_\lambda(M, V_1, ..., V_n)$
contains an irreducible complex curve not contained in the fiber
of the projection $P:\; R_\lambda(M, V_1, ..., V_n)\to M$
if and only if all line bundles $V_i$ are torsion,
and this is equivalent to $R_\lambda(M, V_1, ..., V_n)$ being projective.

\hfill

{\bf Step 2:} 
Since $R_\lambda(M, V_1, ..., V_n)$ is a fibered product
of $R_\lambda(M, V_i)$, to prove that it is projective
it suffices to show that
$R_\lambda(M, V)=\Tot^\circ(V)/\langle\lambda\rangle$ is 
projective if $V$ is a torsion line bundle,
and contains no irreducible
complex curves, except those in the fibers of $P$, otherwise.

\hfill

{\bf Step 3:} Consider the projection
$P:\; R_\lambda(M, V) \to M$. {\bf A multisection}
of $P$ is an irreducible subvariety $Z\subset R_\lambda(M, V)$
such that $P:\; Z \arrow M$ is finite. 
 For any $d\in \Z^{>0},$ consider a map 
$\Psi:\; R_\lambda(M, V)\times_M ... \times_M R_\lambda(M, V)\arrow R_\lambda(M, V^{\otimes d})$
associated with the product map 
$V\times V \times ... \times V \arrow V^{\otimes d}$.
If we apply this map to a multisection of degree $d$, we obtain a section.
Since $R_\lambda(M, V^{\otimes d})$ is a principal elliptic fibration,
it is trivialized by any section, hence $R_\lambda(M, V^{\otimes d})= M \times E$.
Therefore, it is projective. Restricting $\Psi$ to the diagonal,
we obtain a finite map from $R_\lambda(M, V)$ to a projective manifold,
hence existence of multisections ensures that $R_\lambda(M, V)$ is projective;
the converse is also true: any irreducible curve, not contained in the fiber,
gives a multisection.

\hfill

{\bf Step 4:} 
If $V$ is a torsion, $\Psi:\; R_\lambda(M, V)\arrow R_\lambda(M, V^{\otimes d})= M \times E$
is finite, hence $R_\lambda(M, V)$ is projective. Conversely, if $R_\lambda(M, V)$ is projective,
it admits a multisection, which trivializes the elliptic
fibration 
\[ R_\lambda(M, V^{\otimes d})\arrow M.\]
This completes the proof.
\endproof

\hfill

\remark
\ref{_proj_then_quasidiag_Theorem_}
says that projective submanifolds in 
the unrestricted precursor are preimages
of quasi-diagonals.
Clearly, the projective submanifolds in 
the restricted precursor are preimages
of the restricted quasi-diagonals.

%%%%%%%%%%%%%%%%%%%%%%%%%%%%%%%%%%%%%%%%%%%%%%%%%%%%%%%%%%%%

\section{Barlet spaces}\label{_section_Barlet_spaces_}

%%%%%%%%%%%%%%%%%%%%%%%%%%%%%%%%%%%%%%%%%%%%%%%%%%%%%%%%%%%%

It is possible to study the quasi-diagonals using the tools
of complex  geometry: the Barlet spaces and 
the algebraic coreduction, developed by F. Campana
in \cite{_Campana:coreduction_}
(see also \cite{_Campana_Peternell:Cycle_spaces_}). For the
rest of this paper, we mainly focus on this approach,
discovering interesting complex-geometric properties
of the Bogomolov-Guan precursor. 

%%%%%%%%%%%%%%%%%%%%%%%%%%%%%%%%%%%%%%%%%%%%%%%%%%%%%%%%%%%%
\subsection{The Barlet spaces and the coreduction}
%%%%%%%%%%%%%%%%%%%%%%%%%%%%%%%%%%%%%%%%%%%%%%%%%%%%%%%%%%%%

We start by defining the Barlet space
and listing its basic properties. We follow
\cite{_Magnusson:cycle_,_Barlet_Magnusson_,_Campana_Peternell:Cycle_spaces_}.

\hfill

\definition
{\bf Barlet spaces} are spaces of cycles, that is, 
closed complex analytic subvarieties
of a given dimension in a given complex manifold
with multiplicities (positive integers)
assigned to their irreducible components.
They are similar but distinct from the {\bf Douady spaces}, which are
 spaces of closed complex analytic subspaces (possibly with nilpotents
in the structure sheaf). 

\hfill

\definition
Let $M$ be a metric space. Recall that 
{\bf the Hausdorff metric} on 
the set ${\cal C}$ of closed subsets of $M$
is defined as follows: $d(X,Y)$ is the infimum
of all $\epsilon$ such that $X$ belongs to an
$\epsilon$-neighbourhood of $Y$, and
$Y$ belongs to an $\epsilon$-neighbourhood of $X$.
When $M$ is compact, the corresponding topology
on ${\cal C}$ is independent of  the choice
of metric on $M$ as long as the topology of $M$ 
remains the same. It is called 
{\bf the Hausdorff topology}.

\hfill

\remark
The Barlet space of cycles is complex analytic, with
the topology that is  induced by the Hausdorff
topology on the set of all closed subvarieties
\cite{_Magnusson:cycle_,_Barlet_Magnusson_}.

\hfill

%%%%%%%%%%%%%%%%%%%%%%%%%%%%%%%%%%%%%%%%%%%%%%%%%%%%%%%%%%%%
\theorem\label{_Bishop_compactness_Theorem_}
 (Bishop)\\
Let $(M,I)$ be a compact complex manifold, and
$\omega$ a Hermitian form. Fix $a\in \R$ and let 
$Z_a$ be the  Barlet space ${\mathfrak B}_k(M)$ 
of $k$-cycles $Z\subset M$ such that $\int_Z \omega^k \leq a$.
Then $Z_a$ is compact.

\proof \cite{_Magnusson:cycle_,_Barlet_Magnusson_}.
\endproof

%%%%%%%%%%%%%%%%%%%%%%%%%%%%%%%%%%%%%%%%%%%%%%%%%%%%%%%%%%%%
\subsection{The coreduction map}
%%%%%%%%%%%%%%%%%%%%%%%%%%%%%%%%%%%%%%%%%%%%%%%%%%%%%%%%%%%%

In this subsection, we introduce the coreduction map,
due to F. Campana. We follow
\cite{_Campana:coreduction_,_Campana_Peternell:Cycle_spaces_}.

\hfill
 
\definition
A proper holomorphic map $f:\; X \arrow Y$ of
complex varieties is called {\bf  projective}
if there exists a line bundle $L$ on $X$ which
is ample on all fibers of $f$. It is called
{\bf Moishezon} if there exists a 
bimeromorphism $\mu:\; X \arrow \tilde X$
and a projective morphism $\tilde f:\; \tilde X \arrow Y$
such that $f=  \tilde f \circ \mu$.

\hfill

%%%%%%%%%%%%%%%%%%%%%%%%%%%%%%%%%%%%%%%%%%%%%%%%%%%%%%%%%%%%
\theorem\label{_coreduction_Theorem_}
Let $f:\; X \arrow Y$ be a surjective holomorphic map of 
irreducible compact complex varieties with Moishezon fibers. Assume that all
components of the Barlet space of curves on 
$X$ are compact, and there exists a subvariety $A\subset X$
such that $f(A)=Y$ and $f\restrict A$ is Moishezon.
 Then $X$ is Moishezon.

\proof
\cite{_Campana:coreduction_}.
\endproof

\hfill

This leads to the following theorem

\hfill

\theorem 
Let $X$ be a compact complex variety, such that  all
components of the Barlet space of curves on 
$X$ are compact. Then there exists a meromorphic map
$f:\; X \arrow Y$ with Moishezon fibers, called {\bf the coreduction}
of $X$, such that for a general point $x\in X$ 
any Moishezon subvariety passing through $x$
lies in the fiber of $f$.

\proof
\cite{_Campana:coreduction_}.
\endproof

\hfill

The coreduction construction is surveyed and presented in \cite[Theorem 3.11, Theorem 3.12]{_Campana_Peternell:Cycle_spaces_}.

%%%%%%%%%%%%%%%%%%%%%%%%%%%%%%%%%%%%%%%%%%%%%%%%%%%%%%%%%%%%
\subsection{Gauduchon metric and Gauduchon theorem}
%%%%%%%%%%%%%%%%%%%%%%%%%%%%%%%%%%%%%%%%%%%%%%%%%%%%%%%%%%%%

\definition
Let $(M, I, \omega)$ be a complex Hermitian  $n$-manifold.
The Hermitian metric $\omega$ is called {\bf Gauduchon}
if $dd^c(\omega^{n-1})=0$.

\hfill

The following theorem is one of the most fundamental
and important results in the theory of complex manifolds.
Additionally, 
this is one of the very few known results which work for all compact complex
manifolds, and not for some special classes.

\hfill

\theorem (Paul Gauduchon, 1977)\\
Let $(M, I, \omega)$ be a compact complex Hermitian $n$-manifold.
Then  there exists a unique (up to a constant factor)
positive smooth function $e^f$ such that $e^f\omega$ is 
Gauduchon.

\proof \cite{_Gauduchon_1984_}, see also \cite[Chapter 23]{_OV:Principles_}. \endproof

\hfill

%%%%%%%%%%%%%%%%%%%%%%%%%%%%%%%%%%%%%%%%%%%%%%%%%%%%%%%%%%%%
\example\label{_Kodaira_Gauduchon_Example_}
Let $M= G/\Gamma$ be the Kodaira surface realized as
a nilmanifold (Subsection \ref{_Kod_nilma_Subsection_}), with $x,y, z, t$ the generators of its Lie algebra,
and the only non-trivial commutator $[x,y]=z$,
and $a,b,c,d$ the dual 1-forms. The de Rham differential
is a dual map to the commutator. Consider the complex
of left-invariant differential forms on $G$, identified
with $\Lambda^* \g$. Using the Cartan formula, we
can express de Rham differential on $\Lambda^* \g$
through the commutator in $\g$. On $\Lambda^1\g=\g^*$,
de Rham differential is the dual map to $[\cdot,\cdot]:\; \Lambda^2 \g \to \g$.
Therefore, the only non-trivial differential on
$\Lambda^1\g=\g^*$ is given by $d c=a \wedge b$. This 
gives a formula for the left-invariant
Gauduchon metric: $\omega=a\wedge b + c\wedge d$.

%%%%%%%%%%%%%%%%%%%%%%%%%%%%%%%%%%%%%%%%%%%%%%%%%%%%%%%%%%%%
\subsection{SKT manifolds and Barlet spaces}
%%%%%%%%%%%%%%%%%%%%%%%%%%%%%%%%%%%%%%%%%%%%%%%%%%%%%%%%%%%%

\definition
A Hermitian form $\omega$ on a complex manifold is {\bf SKT}
(strong K\"ahler torsion) or {\bf pluriclosed}
if $dd^c \omega =0$.

\hfill

The following theorem is a 1-dimensional version of 
Bishop's theorem (\ref{_Bishop_compactness_Theorem_});
however, it works for the space of
pseudo-holomorphic curves on almost complex manifolds.

\hfill

%%%%%%%%%%%%%%%%%%%%%%%%%%%%%%%%%%%%%%%%%%%%%%%%%%%%%%%%%%%%
\theorem (Gromov)\\
Let $M$ be a compact Hermitian almost complex manifold,
${\goth X}$ the Barlet space of all complex curves on $M$, and
${\goth X}\stackrel \Vol \arrow \R^{>0}$ the volume
function. Then $\Vol$ is {\bf  proper} (preimage of
a compact set is compact).

\hfill

The following corollary was originally obtained in
\cite{_Verbitsky:rat-twi_}, see also \cite{_Verbitsky:S^6_}.

\hfill

%%%%%%%%%%%%%%%%%%%%%%%%%%%%%%%%%%%%%%%%%%%%%%%%%%%%%%%%%%%%
\corollary\label{_SKT_then_Barlet_compact_Corollary_}
Let $M$ be a complex manifold, equipped with a pluriclosed
Hermitian form $\omega$, and $X$ a component of the moduli of
complex curves. Then the function $\Vol:\; X \arrow \R^{>0}$
is constant, and $X$ is compact.

\hfill

\proof
Since $\Vol\geq 0$, the set
$\Vol^{-1}(]-\infty, C])$ is compact for all $C\in \R$,
the function $\Vol$ has a minimum somewhere in $X$. However, a
pluriharmonic function which has a minimum is necessarily constant
by E. Hopf's strong maximum principle (\cite{_Gilbarg_Trudinger_}). Therefore,
$\Vol$ is constant: $\Vol=A$. Now,
compactness of $X= \Vol^{-1}(A)$ follows from Gromov's theorem. \endproof

\hfill

Later in this section,
we are going to prove that the Bogomolov-Guan precursor is
SKT, and apply \ref{_SKT_then_Barlet_compact_Corollary_} to its Barlet space.

\hfill

Let $\omega= a\wedge b + c\wedge d$ be the Gauduchon metric on $\Kod$,
written in terms of the frame associated with its Lie algebra
as above, and  $a_i, b_i, c_i, d_i$, $i=1, ..., n+1$ 
the corresponding 1-forms on $\Kod^{n+1}$, 
obtained in the same way as in the nilmanifold
description of the Kodaira surface (Subsection
\ref{_Kod_nilma_Subsection_}), 
\[ d(c_i)=a_i\wedge b_i,\ \  d(a_i)=d(b_i)=d(d_i)=0.
\] Consider the
Hermitian form $\omega:=\sum_i a_i\wedge b_i + c_i\wedge d_i$
on $\Kod^{n+1}$. Since $\Kod$ is Gauduchon, $dd^c\omega=0$.

\hfill

\remark
 Let $X_1:= \Kod^{n+1}/E_1$, where $E_1$ acts diagonally,
and $\Kod^{n+1} \stackrel \pi \arrow X_1$ the corresponding
holomorphic projection. Denote by 
\[ \pi_*:\; \Lambda^{p,q}(\Kod^{n+1})\arrow \Lambda^{p-1,q-1}(X_1)\]
the pushforward, that is, the fiberwise integration of differential
forms. 
Clearly, $\pi_*(\omega^2)$ is a Hermitian form on $X_1$.
Since $\pi_*$ commutes with $d, d^c$,
to show that $dd^c\pi_*(\omega^2)=0$
it would suffice to show that the (2,2)-form
\[
dd^c\pi_*(\omega^2)= \pi_*(dd^c \omega\wedge \omega)
+ \pi_*(d\omega\wedge d^c \omega) = \pi_*(d\omega\wedge d^c \omega)
\]
vanishes.

\hfill

The following proposition immediately implies that $X_1$,
and hence the Bogomolov-Guan precursor $X\subset X_1$, is an SKT manifold.

\hfill

\proposition
Let $\Kod^{n+1} \stackrel \pi \arrow X_1$ be the 
holomorphic projection and $\omega$  be the SKT Hermitian
form defined above. Then $\pi_*(d\omega\wedge d^c
\omega)=0$.

\hfill

\proof
For any invariant differential form, expressed as a 
Grassmann polynomial of $a_i, b_i, c_i, d_i$, the fiberwise
integration is given by the operator $\sum_{i=1}^{n+1} \iota_{z_i} \iota_{t_i}$,
where $\iota$ denotes the convolution with a vector field, and
$z_i, t_i$ the generators of the vector fields tangent
to the components of $\Kod^{n+1}$ associated with the
frame $x,y, z,t\in T\Kod$ dual to $a,b,c,d \in \Lambda^1\Kod.$
Clearly, $d\omega =\sum_{i=1}^{n+1} a_i \wedge b_i \wedge d_i$,
and $d^c\omega =-\sum_{i=1}^{n+1} a_i \wedge b_i \wedge c_i$,
hence $d\omega\wedge d^c \omega= \sum_{i<j} a_i \wedge b_i\wedge d_i\wedge a_j \wedge b_j\wedge c_j$.
After contracting with the bivector 
$\sum_i z_i\wedge t_i$ dual to $\sum_i c_i \wedge d_i$,
this form vanishes. \endproof

\hfill

We have established the following theorem.

\hfill

%%%%%%%%%%%%%%%%%%%%%%%%%%%%%%%%%%%%%%%%%%%%%%%%%%%%%%%%%%%%
\theorem \label{_precursor_SKT_Theorem_}
The Bogomolov-Guan precursor manifold (restricted and unrestricted) 
admits an SKT metric.
\endproof

%%%%%%%%%%%%%%%%%%%%%%%%%%%%%%%%%%%%%%%%%%%%%%%%%%%%%%%%%%%%
\subsection{Countability of the number of quasi-diagonals obtained from
  the coreduction}
\label{_Countability_coreduction_Subsection_}
%%%%%%%%%%%%%%%%%%%%%%%%%%%%%%%%%%%%%%%%%%%%%%%%%%%%%%%%%%%%

In this subsection, we apply \ref{_proj_then_quasidiag_Theorem_},
\ref{_SKT_then_Barlet_compact_Corollary_}
and \ref{_precursor_SKT_Theorem_} to show that
the number of $(E,L)$-quasi-diagonals is countable
for any given $E$ and $L$; an algebraic proof
of this result is given in 
\ref{_corollary:_line_bundles_for_restricted_qd_Corollary}.

\hfill

%%%%%%%%%%%%%%%%%%%%%%%%%%%%%%%%%%%%%%%%%%%%%%%%%%%%%%%%%%%%
\theorem\label{_quasidiag_countable_via_SKT_Theorem_}
There are at most countably many quasi-diagonals for any given $(E,L)$.

\hfill

\pstep
Since quasi-diagonals in $E^n$ are obtained from
quasi-diagonals in $E^2$ (\ref{_quasi-diagonals_in_En_Theorem_}),
it suffices to show that there are at most countably 
many quasi-diagonals in $E^2$.
If the number of quasi-diagonals is uncountable,
there are continuous families of quasi-diagonals, which cover $E^2$.
We make this assumption and show that it leads to contradiction.

\hfill

{\bf Step 2:}
Denote by $B$ a positive-dimensional 
irreducible component of the Barlet space of
quasi-diagonals in $E^2$,
and let $C_t$, $t\in B$ denote the quasi-diagonals.
Then ${\cal L}_1$ and ${\cal L}_2$ are torsion on each $C_t$
and satisfy ${\cal L}_1^{\otimes d}=\calo_{C_t}$ for some fixed $d$ and all $t$.
Let $P:\; X \arrow E^2$ be the unrestricted 
Bogomolov-Guan precursor associated with $L$; by 
\ref{_Kodaira-type_manifolds_Lemma_}, we have
$X= R_\lambda(E^2, {\cal L}_1\otimes {\cal L}_2^{-1})$.
Consider the degree $d^2$ map
$P_d:\; X\arrow R_\lambda(E^2, {\cal L}_1^{\otimes d},
{\cal L}_2^{\otimes d})$.
By \ref{_diagonal_fibration_Theorem_},  $P_d^{-1}(C_t) \stackrel {P_d} \arrow C_t$
is a trivial $E_1^2$-fibration. Therefore, it admits a family
of sections parametrized by $E_1^2$. These sections
produce degree $d^2$ multisections of 
$P:\; R_\lambda(E^2, {\cal L}_1, {\cal L}_2)\arrow E^2$,
passing through every point of $X=R_\lambda(E^2, {\cal L}_1, {\cal L}_2)$.

\hfill

{\bf  Step 3:}
Using the curves constructed in Step 2, we obtain
 a collection of curves connecting any two points
in $X$. Using the coreduction theorem of Campana (\ref{_coreduction_Theorem_}),
we obtain that $X$ is Moishezon, contradicting
\ref{_Proof_for_class_C_Theorem_}. 
\endproof
 
\hfill

\corollary
Let $X$ be a restricted Bogomolov-Guan precursor associated
with a general $(E,L)$. Then all projective, 
Moishezon subvarieties of $X$ belong to the
fiber of the Lagrangian projection.

\hfill

\proof 
By \ref{_proj_then_quasidiag_Theorem_}, all 
 projective, Moishezon subvarieties in a restricted Bogomolov-Guan precursor
project to restricted $(E,L)$-quasi-diagonals in $E^n$.
By \ref{_corollary:_line_bundles_for_restricted_qd_Corollary}, 
for a general $L$ 
there are no restricted $(E,L)$-quasi-diagonals.
\endproof

\hfill

%%%%%%%%%%%%%%%%%%%%%%%%%%%%%%%%%%%%%%%%%%%%%%%%%%%%%%%%%%%%

\appendix
\section{Preimages of lines in 4-dimensional BG} \label{section: no quasi-diagonals over lines}
In this section, we consider the Lagrangian fibration of a $4$-dimensional Bogomolov--Guan manifold $P\colon BG \to \p^2$.
One can ask whether a line on the Lagrangian base $\p^2$ is the image of a restricted quasi-diagonal on $E^3$ for some line bundle 
$L$ on $E$. \ref{_restricted_quasi-diagonal_for_3_Example_} provides an example of this situation.

\hfill

\begin{lemma}\label{_lemma_restricted_qd_in_E3_maps_to_line_}
Let $S\subset A = \{(x,y,z)\mid x+y+z = e \} \subset E^3$ be 
the curve defined in \ref{_restricted_quasi-diagonal_for_3_Example_}. 
Then $\alpha(S)$ is a line on $\p^2$.
\end{lemma}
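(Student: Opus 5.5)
The idea is to make the map $\alpha\colon A\to E^{(3)}$ explicit and show that $\alpha(S)$ has degree one in $\p^2$, without ever writing an equation of the line. The identification $A/\SG_3\cong\p^2$ comes from the linear system $|3e|$. A point of $\alpha(A)$ is an unordered triple $\{x,y,z\}$ with $x+y+z=e$, which is the same as an effective divisor of degree $3$ linearly equivalent to $3[e]$. Under the embedding $E\hookrightarrow\p^2=\p(H^0(E,\OP_E(3e))^*)$ given by $\wp,\wp'$, such triples are exactly the intersections of $E$ with lines. So $\alpha(A)=|3e|\cong(\p^2)^*$, and the point $\alpha(x,y,z)$ is the line through $x,y,z$ in the Weierstrass model.

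With this description the plan has three steps.

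\textbf{Step 1.} Write $E=\C/\Z[\zeta]$ as $y^2=x^3-1$, up to scaling. In these coordinates, multiplication by $\zeta$ acts by $(x,y)\mapsto(\zeta x,y)$, and this is a linear automorphism $g$ of $\p^2$ preserving $E$.

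\textbf{Step 2.} For $p\in E$, the triple $\{p,\zeta p,\zeta^2p\}$ is $\{p,gp,g^2p\}$, which sums to $e$ because $1+\zeta+\zeta^2=0$. The line $\ell_p$ through these three points is $g$-invariant. In the affine chart the points are $(\zeta^k x_0,y_0)$, so they all lie on the horizontal line $y=y_0$. This gives the explicit formula $\alpha(S)=\{\ell_p\}=\{\,y=c\,\}_{c\in\p^1}$, the pencil of lines through the point at infinity $[1:0:0]$ of the dual plane. This point is not on $E$, since the flex of $E$ at infinity is $[0:1:0]$, so these lines are not tangent there.

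\textbf{Step 3.} A pencil of lines through a fixed point of $\p^2$ is, by duality, a line in the dual plane $(\p^2)^*\cong|3e|$. Hence $\alpha(S)$ is a line in $\p^2$. The special cases are handled by the same formula: $p=e$ gives the line at infinity $y=\infty$ with tripled point $e$, and the $3$-torsion fixed points of $\zeta$ give flex tangents, which still lie in the pencil.

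The main obstacle is making rigorous the identification of $A/\SG_3$ with $|3e|$, and hence with $(\p^2)^*$, in a way compatible with the isomorphism $X/\SG_3\cong\p^{n-1}$ used in the paper. I would take this identification to be the standard one via $\sigma^{-1}(e)\subset E^{(3)}$ being the fibre $|3e|$ of the Abel–Jacobi $\p^2$-bundle. After that, the argument reduces to the elementary observation in Step 2 that $\zeta$-orbits lie on horizontal lines $y=\mathrm{const}$.
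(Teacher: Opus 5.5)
Your proof is correct, but it takes a different route from the paper. The paper's argument uses symmetry and no coordinates. The diagonal action of $\zeta$ on $E^3$ preserves $A$ and commutes with $\SG_3$, so it descends to an automorphism of $A/\SG_3\cong\p^2$, and this automorphism is faithful. Since $\zeta\cdot(x,\zeta x,\zeta^2x)=(\zeta x,\zeta^2 x,x)$ is a permutation of the original triple, every point of $\alpha(S)$ is fixed. A curve fixed pointwise by a nontrivial finite-order automorphism of $\p^2$ is a line, because such an automorphism is linear and its fixed locus is a union of projectivized eigenspaces.

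You instead work in the model $A/\SG_3=|3e|=(\p^2)^*$, which is the same description the paper uses in \ref{_lemma_duality_}, together with the Weierstrass form $y^2=x^3-1$, where $\zeta$ acts by $(x,y)\mapsto(\zeta x,y)$. This lets you exhibit $\alpha(S)$ explicitly as the pencil of lines $Y=cZ$ through $[1:0:0]$. Your route identifies the line concretely and makes the faithfulness check unnecessary. The paper's route is shorter and needs only some isomorphism with $\p^2$. The two arguments also meet: in your coordinates, the fixed locus of $\zeta$ on $|3e|$ is exactly your pencil plus the isolated point corresponding to the line $X=0$.

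Two small corrections:
\begin{enumerate}
\item The ``main obstacle'' you flag is not one. Any two isomorphisms of a variety with $\p^2$ differ by an element of $\mathrm{PGL}_3$, which sends lines to lines, so being a line does not depend on which identification you choose.
\item The point $[1:0:0]$ lies in the plane containing $E$, not in the dual plane. The remark about whether it lies on $E$ plays no role in the argument.
\end{enumerate}
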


\hfill

\proof
 The group $\langle\zeta\rangle$ acts diagonally on $E^3$. 
 This action preserves the divisor $A\subset E^3$ and 
 commutes with the action of symmetric group $\SG_3$.
 Thus, $\langle \zeta\rangle$ acts on $A/\SG_3\cong \p^2$,
 one can check that this action is faithful.

 Consider the image of the curve $S\subset A$ under $\alpha$.
 One can observe that each point in $\alpha(S)$ is fixed 
 under the action of $\langle \zeta\rangle$  on $\p^2$.
 Therefore, $\alpha(S)$ is a pointwise fixed curve 
 under a non-trivial action of $\langle \zeta\rangle$.
 Thus, it is a line. 
\endproof

\hfill

The situation described in \ref{_restricted_quasi-diagonal_for_3_Example_} is very specific.
We are going to check whether a preimage of a line in $\p^2$ could be a smooth curve in $E^3$
and whether one can find a line bundle on $E$ such that this smooth curve is a quasi-diagonal.

We fix the notation: let $e\in E$ be the neutral point and denote by 
$p_i\colon E^3\to E$ the projection to the $i$-th component of the product. 
Denote by $A\subset E^3$ the divisor $A = \{(x_1,x_2,x_3)\in E^3\mid x_1+x_2+x_3 = e\}$. 
Denote by $\alpha\colon A\to \p^2$ the symmetrization map. The ramification divisor of $\alpha$
can be described in the following way.

\hfill

\begin{lemma}\label{_lemma_duality_}
  Consider the embedding by the linear system 
  \[
  \varphi = \varphi_{|\OP_E(3e)|}\colon E\to \p(H^0(E,\OP_E(3e)))^{\vee}\cong (\p^2)^{\vee}.
  \]
  The curve $\varphi(E)\subset  (\p^2)^{\vee}$ is projectively dual to the ramification  divisor of $\alpha$ in $\p^2$. 
  Moreover, for each point $(x,y,z)\in A$ its image $\alpha((x,y,z))\in \p^2$  corresponds to the line passing through points $\varphi(x)$,
  $\varphi(y)$ and $\varphi(z)$ in $ (\p^2)^{\vee}$.
\end{lemma}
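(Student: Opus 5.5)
We use the classical description of $A/\SG_3$ through the Weierstrass cubic. Embed $E$ by $\varphi=\varphi_{|\OP_E(3e)|}$ as a plane cubic $C=\varphi(E)\subset(\p^2)^\vee$. By Abel's theorem, three points $x,y,z\in E$ (with multiplicity) are the intersection of $C$ with a line exactly when $x+y+z\sim 3e$, that is, when $x+y+z=e$ in the group law. So an unordered triple $\{x,y,z\}$ with $(x,y,z)\in A$ is the same thing as a line $\ell\subset(\p^2)^\vee$ together with its intersection divisor $\ell\cap C$. The line through $\varphi(x),\varphi(y),\varphi(z)$ is a point of the dual plane $\p^2=((\p^2)^\vee)^\vee$. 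This gives a map $A\to\p^2$ whose fibers are exactly the $\SG_3$-orbits.

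The key steps are as follows.

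\emph{Step 1.} Show that this map identifies $A/\SG_3$ with $\p^2$ and coincides with $\alpha$. Every line meets $C$ in a degree-3 divisor, which is an unordered triple summing to $e$. Conversely, a triple determines its line: if the three points are distinct, the line is the one through them; if there are coincidences, it is the tangent or flex tangent line, since $\OP_E(3e)$ is very ample. The resulting bijection $A/\SG_3\to\p^2$ is holomorphic, and $\p^2$ is normal, so it is an isomorphism. Since the identification $A/\SG_3\cong\p^2$ used in the paper is this one (equivalently, $|\OP_E(3e)|^\vee$), the second assertion of the lemma follows.

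\emph{Step 2.} Identify the ramification divisor. The quotient $\alpha\colon A\to\p^2$ ramifies exactly along the loci where two coordinates coincide, i.e. the images of $\{x=y\}\subset A$ and its $\SG_3$-translates. By Step 1, a point of $\p^2$ lies in this branch locus if and only if the corresponding line $\ell$ meets $C$ with multiplicity at least $2$ somewhere, i.e. $\ell$ is tangent to $C$. The set of lines tangent to $C$, viewed as a curve in $\p^2$, is by definition the dual curve $C^\vee$. Hence the branch curve equals $C^\vee$. Biduality (reflexivity, valid in characteristic $0$) then gives $(C^\vee)^\vee=C=\varphi(E)$, which is the first assertion.

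The main obstacle is the bookkeeping in Step 1 at non-reduced triples, namely checking that the set-theoretic bijection is an isomorphism compatible with the paper's $\alpha$. Normality of $\p^2$ and Zariski's main theorem handle the isomorphism. For compatibility, I would note that both maps are the $\SG_3$-quotient of $A$, and the quotient is unique up to unique isomorphism; the ramification statement is intrinsic to $\alpha$, so it does not depend on this choice.
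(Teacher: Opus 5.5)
Your argument is correct. The paper gives no argument of its own for this lemma: it simply cites \cite[Lemma 7.5]{_BKKY_}. Your proposal supplies the classical argument behind that citation, in three steps.
\begin{enumerate}
\item Abel's theorem together with completeness of $|\OP_E(3e)|$ identifies $A/\SG_3$ with $|\OP_E(3e)|$, the plane of lines in $(\p^2)^\vee$. Under this identification, $(x,y,z)$ goes to the unique line cutting out $[x]+[y]+[z]$ on $\varphi(E)$.
\item The reflection loci $\{x=y\}$, $\{y=z\}$, $\{x=z\}$ map onto the set of tangent lines, i.e.\ onto the dual curve $\varphi(E)^\vee$.
\item Biduality in characteristic zero then gives the first assertion.
\end{enumerate}
What your route buys is a self-contained proof that also makes explicit the identification $A/\SG_3\cong\p^2$, which the paper only asserts (``one can check''). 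For the morphism property in Step 1, the cleanest route is the inverse map $\ell\mapsto \ell\cap\varphi(E)$ into $E^{(3)}$. That map is visibly algebraic and bijective onto $A/\SG_3$, so normality of $A/\SG_3$ and Zariski's main theorem finish.

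Two small corrections.
\begin{itemize}
\item The target of $\alpha$ is $|\OP_E(3e)|=\p(H^0(E,\OP_E(3e)))$ itself, i.e.\ the dual of the plane containing $\varphi(E)$. It is not $|\OP_E(3e)|^\vee$.
\item Your closing claim that the duality statement ``does not depend on this choice'' is inaccurate. Suppose the identification $A/\SG_3\cong\p^2$ is changed by an automorphism $g$ of $\p^2$. Then the branch curve becomes $g(\varphi(E)^\vee)$, which is dual to the image of $\varphi(E)$ under the contragredient automorphism of $(\p^2)^\vee$. In general that is a different cubic. The first assertion is meaningful only for the canonical identification, which the second assertion pins down. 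So take that identification as the definition of $\alpha$, rather than appealing to uniqueness of quotients.
\end{itemize}
With that reading, and with ``ramification divisor in $\p^2$'' understood as the branch curve (as you did), the proof is complete.
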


\hfill

\proof
 \cite[Lemma 7.5]{_BKKY_}.
\endproof

\hfill

Fix a line $\l\subset \p^2$ and consider its preimage $S_{\l} = \alpha^{-1}({\l})$. 
Then $S_{\l}$ is an $\SG_3$-invariant curve in $A$. Further, we deduce that $S_{\l}$ is 
a smooth curve if $\l$ is general, see \ref{lemma: RR}. However, if $\l$ is tangent
to the ramification divisor of $\alpha$ then its preimage is reducible.

\hfill

\begin{lemma} \label{_lemma_class_of_preimage_of_line_in_A_}
Let ${\l}$ be a line tangent to the ramification divisor of the map~$\alpha$. Then there exists a point $x\in E$ such that
\[S_{\l}
= \left(p_1^{-1}(x)\cup p_2^{-1}(x)\cup
p_3^{-1}(x)\right)\cap A.
\]
\end{lemma}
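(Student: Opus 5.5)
We would use the duality of \ref{_lemma_duality_} and translate everything into the geometry of the plane cubic $C:=\varphi(E)\subset(\p^2)^\vee$. Under this duality a point of $\p^2$ is a line in $(\p^2)^\vee$. A point $(x,y,z)\in A$ maps to the line $\alpha((x,y,z))$ through $\varphi(x),\varphi(y),\varphi(z)$; these three points are collinear precisely because $x+y+z=e$ and $\varphi$ is given by $|\OP_E(3e)|$. A line $\l\subset\p^2$ is then dual to a point $q_\l\in(\p^2)^\vee$, and $\alpha((x,y,z))\in\l$ holds exactly when the line spanned by $\varphi(x),\varphi(y),\varphi(z)$ passes through $q_\l$. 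The ramification divisor of $\alpha$ consists of the lines tangent to $C$, and by \ref{_lemma_duality_} it is projectively dual to $C$. Since $C$ is smooth, biduality shows that the lines tangent to the ramification divisor are exactly the lines dual to points of $C$. Hence, in the case we are considering, $q_\l=\varphi(x)$ for some $x\in E$. This first step uses only biduality for the smooth cubic; the only care needed is to treat flexes and other special points correctly. This is the step we expect to require the most attention.

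Next we show $S_\l\subset\bigcup_i p_i^{-1}(x)\cap A$. Take $(x_1,x_2,x_3)\in S_\l$. The line $m$ through the $\varphi(x_i)$ contains $\varphi(x)\in C$. By Bezout, $m\cap C$ is exactly the divisor $\varphi(x_1)+\varphi(x_2)+\varphi(x_3)$ counted with multiplicities; degenerate cases such as a tangent line with $x_i=x_j$ are included here, because $\alpha$ records the divisor $m|_C$. Since $\varphi(x)\in m\cap C$, we get $\varphi(x)=\varphi(x_i)$ for some $i$. As $\varphi$ is an embedding, $x_i=x$, so the point lies in $p_i^{-1}(x)$.

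Conversely, take a point of $p_1^{-1}(x)\cap A$, that is, $(x,y,z)$ with $y+z=e-x$. The line through $\varphi(x),\varphi(y),\varphi(z)$ passes through $\varphi(x)=q_\l$, so $\alpha((x,y,z))\in\l$. The same argument applies to $p_2^{-1}(x)$ and $p_3^{-1}(x)$, using $\SG_3$-invariance. This gives the equality of sets. If one also wants an equality of cycles, it can be checked by comparing classes. Each $p_i^{-1}(x)\cap A\cong E$ is the curve $\{y+z=e-x\}$, and $\alpha$ restricted to it has degree $1$ onto $\l$, modulo the involution swapping $y$ and $z$. The total degree of $S_\l\to\l$ is $6$, which is consistent with the three components each of degree $2$, so no component carries extra multiplicity.
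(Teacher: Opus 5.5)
Your proposal is correct and takes the paper's route. The paper proves the lemma in one line by appeal to the duality description in \ref{_lemma_duality_}; you spell this out by identifying lines tangent to the branch curve with pencils through a point $\varphi(x)$ of the cubic, then obtaining both inclusions from the fact that a line meets $\varphi(E)$ exactly in the divisor $\varphi(x_1)+\varphi(x_2)+\varphi(x_3)$. Your multiplicity check (each component maps to $\l$ with degree $2$, totalling $\deg\alpha=6$) is a worthwhile addition, since \ref{lemma: RR} uses this lemma to get $c_1(\alpha^*\OP_{\p^2}(1))=F_1+F_2+F_3$, which needs equality as cycles and not just as sets.
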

\proof
 This follows from the description of the ramification divisor of $\alpha$ given in \ref{_lemma_duality_}.
\endproof

 %The line bundle
%$\alpha^*(\OP_{\p^2}(1))$ is ample; thus, by Bertini
%theorem $S_{\l}$ is a smooth curve for a general
%line~${\l}$. 

\hfill

\lemma \label{lemma: RR} Each element in 
the linear system $|\alpha^*(\OP_{\p^2}(1))|$ is the preimage of a line $\l\subset \p^2$ 
and a general element  of $|\alpha^*(\OP_{\p^2}(1))|$ is smooth.

\hfill

\proof 
Denote the line bundle $\alpha^*(\OP_{\p^2}(1))$ by $L$.
Denote by $F_i \in NS(A)$ the class of fiber $p_i^{-1}(x)\cap A$. 
On the surface $A$ one has $F_i^2 = 0$
and $F_i\cdot F_j = 1$. 

By  \ref{_lemma_class_of_preimage_of_line_in_A_}  we observe that $c_1(L) =
F_1+F_2+F_3$. Then by Riemann-Roch theorem we compute 
(note that $A$ is a torus, hence its Chern classes
vanish and don't contribute to the Riemann-Roch-Hirzebruch formula):
\[
h^0(A,L)-h^1(A,L)+h^2(A,L) = \chi(L) = \frac{1}{2}(F_1+F_2+F_3)^2 = 3.
\]
Moreover, since for distinct $x\in E$  the preimages of
$\l_x$ don't have common points,
the line bundle $L$ has empty base locus.
Thus, it is ample and $H^1(A,L) = H^2(A,L) = 0$ by Kodaira
vanishing on $A$. This implies that $H^0(A,L) = H^0(\p^2,\OP_{\p^2}(1))$.
Thus, the result follows by Bertini theorem.
\endproof

\hfill

The next assertion is the main result of this section. 
It is followed by several technical lemmas which are used in its proof.

\hfill

\proposition \label{_proposition_S_l_is_not_quasi-diagonal_Lemma_} 
If ${\l}$ is a line on $\p^2$ such that $S_{\l}$ is smooth, then $S_{\l}$ is not a quasi-diagonal for any line bundle $L$ on the curve $E$. 

\hfill

\proof
By \ref{lemma: RR} for a general line $\l$ the curve $S_{\l}$ is smooth.
Assume that $y$ is a point on $E$ 
such that the curve $S_{\l}$ is a quasi-diagonal with
respect to the line bundle $\calo([y])$.
Then, for certain $n\in \Z^{>1}$, we have
$\left(p_1^*L\otimes p_2^*L^{-1}\right)|_{S_{\l}} \cong \OP_{S_{\l}}$, 
here $L=\calo(n[y])$. Since replacing $n$ by its multiple preserves
the properties of the quasi-diagonals, we further assume that $n\geqslant 3$.

Denote by $\mathcal{F}_1, \mathcal{F}_2, \mathcal{F}_3$ the following sheaves on $A$:
\begin{align*}
&\mathcal{F}_1 = p_1^*\OP(n[y] - [x])\otimes p_2^*\OP(-n[y] - [x])\otimes p_3^*\OP(- [x]);\\
&\mathcal{F}_2 = p_1^*\OP(n[y])\otimes p_2^*\OP(-n[y]);\\
&\mathcal{F}_3 = \left(p_1^*L\otimes p_2^*(L^{-1})\right)|_{S_{\l}}.
\end{align*}
These sheaves fit the following exact sequence on the surface $A$:
\begin{equation}\label{_exact_sequence_for_qd_Equation_}
0\to \mathcal{F}_1 \to \mathcal{F}_2 \to \mathcal{F}_3\to 0.
\end{equation}
If $S_{\l}$ is a quasi-diagonal then 
$\mathcal{F}_3 = \OP_{S_{\l}}$; thus, $H^0(A,\mathcal{F}_3) = \mathbb{C}$ since $S_{\l}$ is smooth. 
On the other hand, if $S_{\l}$ is not a quasi-diagonal 
then $H^0(A,\mathcal{F}_3) = 0$.
Thus, by \ref{_lemma_cohomology_of_F1} and \ref{_lemma_cohomology_of_F2} 
below in order to check whether $S_{\l}$ is a quasi-diagonal, it suffices 
to check that the map $H^1(A, \mathcal{F}_1)\to H^1(A, \mathcal{F}_2)$ is injective.
Since this map is indeed injective by 
\ref{lemma: proof of injectivity}, we conclude the result.
\endproof

\hfill

\lemma \label{_lemma_cohomology_of_F1} Let $x$ and $y$ be two points on the elliptic curve $E$ and let $n\geqslant 3$ be a positive integer.
Then the cohomology groups of the line bundle $\mathcal{F}_1 = p_1^*\OP(n[y] - [x])\otimes p_2^*\OP(-n[y] - [x])\otimes p_3^*\OP(- [x])$ on $A$
are as follows:
\[
H^i(A, \mathcal{F}_1 ) = \left\{\begin{aligned}
&0, && \text{ if $i = 0$ or $2;$}\\
&\C^{n^2-3}, && \text{ if $i = 1$.}
\end{aligned}\right.
\]

\hfill

\proof
 Fix a point $a\in E$, consider the elliptic curve $C_1 = p_1^{-1}(a)\cap A\subset A$ and restrict 
 $\mathcal{F}_1$ to $C_1$. One can check that $\deg(\F_1|_{C_1}) = -n-2$. Since the class of $C_1$ 
 is nef on $A$ we deduce that $H^0(A,\mathcal{F}_1) = 0$.

 Consider the curve $C_2 = p_2^{-1}(a)\cap A\subset A$ and restrict the dual line bundle
 $\mathcal{F}_1^{-1}$ to $C_2$. One can check that $\deg(\F_1^{-1}|_{C_2}) = 2-n<0$.
 Since the class of the curve $C_2$ is nef on $A$ by Serre duality, we conclude:
 \[
 H^2(A,\F_1) = H^0(A,\F_1^{-1})^{\vee} = 0.
 \]

 Now denote by $F_i\subset E^3$ the preimage of a point $x\in E$ under 
 the projection to the $i$-th component of the product.
 The Neron--Severi group of $A\cong E^2$ contains classes 
 $f_1, f_2, f_3$ such that $f_i$ is the class of the divisor $F_i\cap A$.
 One can check that $f_i^2 = 0$ for all $i = 1,2,3$ and $f_i\cdot f_j = 1$ 
 for all $1\leqslant i<j\leqslant 3$. Then by the Riemann--Roch formula, one 
 has the following equality:
 \[
 \chi(\F_1) = \frac{\left((n-1)f_1  - (n+1)f_2 - f_3\right)^2}{2} = 3-n^2.
 \]
 Since we already have $H^0(A,\mathcal{F}_1) = H^2(A,\mathcal{F}_1) = 0$, the proof is complete.
\endproof

\hfill

\lemma \label{_lemma_cohomology_of_F2} Let $y$ be a point on the elliptic curve $E$ and let $n\geqslant 1$ be a positive integer.
Then the cohomology groups of the line bundle $\mathcal{F}_2 =  p_1^*\OP(n[y])\otimes p_2^*\OP(-n[y])$ on $A$
are as follows:
\[
H^i(A,\mathcal{F}_2 ) = \left\{\begin{aligned}
&0, && \text{ if $i = 0$ or $2;$}\\
&\C^{n^2}, && \text{ if $i = 1$.}
\end{aligned}\right.
\]

\hfill 

\proof
This follows from the K\"unneth formula.
\endproof

\hfill

\begin{lemma}\label{lemma: proof of injectivity}
The map $H^1(A, \mathcal{F}_1)\to H^1(A, \mathcal{F}_2)$ induced by the exact sequence \eqref{_exact_sequence_for_qd_Equation_} is injective.
\end{lemma}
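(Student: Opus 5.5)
The plan is to read the map $H^1(A,\F_1)\to H^1(A,\F_2)$ off the long exact cohomology sequence of \eqref{_exact_sequence_for_qd_Equation_}. By \ref{_lemma_cohomology_of_F1} and \ref{_lemma_cohomology_of_F2}, $H^0(A,\F_1)=H^0(A,\F_2)=0$, so the sequence begins
\[
0\to H^0(A,\F_3)\to H^1(A,\F_1)\to H^1(A,\F_2).
\]
Injectivity is therefore \emph{equivalent} to $H^0(S_{\l},\F_3)=0$. Since $S_{\l}$ is smooth and $\F_3$ has degree $0$, this says exactly that $\left(p_1^*L\otimes p_2^*L^{-1}\right)|_{S_{\l}}$ is nontrivial, for $L=\calo(n[y])$ with $n\geqslant 3$ and every $y\in E$. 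So the plan is to prove this non-triviality directly, rather than by a computation in $H^1$. Multiplication by the defining section $s\in H^0(A,\F_2\otimes\F_1^{-1})$, whose class is $f_1+f_2+f_3$, does not seem to lend itself to a degeneration argument. Indeed, I checked that for the reducible member of \ref{_lemma_class_of_preimage_of_line_in_A_}, filtering the map through the three fibres shows a kernel of dimension $n$ at the last step. So semicontinuity from a special member is not available.

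First, I will describe $S_{\l}$ geometrically using \ref{_lemma_duality_}. The line $\l\subset\p^2$ is a point $q\in(\p^2)^\vee$. A triple $(x_1,x_2,x_3)\in A$ lies in $S_{\l}$ iff $\phi(x_1),\phi(x_2),\phi(x_3)$ lie on a line through $q$. Identifying $A$ with $E^2$ via $(x_1,x_2)$, this gives $S_{\l}\cup\Delta=E\times_{\p^1}E$, where $\pi_q\colon E\to\p^1$ is the projection from $q$. Smoothness of $S_{\l}$ corresponds to $q\notin\phi(E)$, so $\pi_q$ has degree $3$. Also $p_1,p_2\colon S_{\l}\to E$ have degree $2$, and $S_{\l}$ has genus $4$ by adjunction, since $(f_1+f_2+f_3)^2=6$.

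Second, I will extract constraints from the correspondence by push--pull. For a point $x\in E$ one has $p_{2*}p_1^*[x]=\pi_q^*\pi_q(x)-[x]$. Applying $p_{2*}$ to a hypothetical isomorphism $p_1^*L\cong p_2^*L$ gives $\pi_q^*\calo(\deg L)\otimes L^{-1}\cong L^{2}$, that is $L^{3}\cong\calo(3e)^{n}$. This forces $n([y]-[e])$ to be $3$-torsion, which is a strong restriction on $y$.

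Third, to finish I will use finer information on $S_{\l}$: that $p_1^*$ and $p_2^*$ are injective on $\Pic_0(E)$ modulo finite kernels, and the action of the $\SG_3$ symmetry exchanging $p_1,p_2,p_3$. In the remaining case ($y$ a torsion translate of $e$), I expect to reduce to $p_1^*\calo(3[e])\cong p_2^*\calo(3[e])$ up to torsion on $S_{\l}$. I would then show this fails by comparing with $\pi_q^*\calo(1)$, which restricts to the same class on all three projections. Concretely, I will show that the difference $p_1^*([e])-p_2^*([e])$ is not torsion in $\operatorname{Jac}(S_{\l})$, exhibiting it via the Prym-type part of $\operatorname{Jac}(S_{\l})$ complementary to $p_1^*\Pic_0(E)$.

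I expect this last step to be the main obstacle. The push--pull identity only constrains $L$ up to torsion, so one must genuinely use that $S_{\l}$ is a smooth genus-$4$ curve and not the union of fibres. If the Jacobian argument stalls, the fallback is to return to the explicit injectivity statement: write $H^1(A,\F_1)$ and $H^1(A,\F_2)$ via theta functions on $E\times E$ (Künneth for $\F_2$). I would then check injectivity of multiplication by $s$ via the explicit equation of $E\times_{\p^1}E$ minus the diagonal.
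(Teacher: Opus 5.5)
Your reduction is correct. Since $H^0(A,\F_2)=0$, the kernel of $H^1(A,\F_1)\to H^1(A,\F_2)$ is exactly $H^0(S_\l,\F_3)$, so injectivity is equivalent to $\F_3\not\cong\OP_{S_\l}$. Your push--pull step correctly establishes this whenever $y$ is not a $3n$-torsion point. The remaining step, however, cannot be carried out, because the claim you need there is false. You already use that $\pi_q^*\OP_{\p^1}(1)\cong\OP_E(3[e])$, since the fibres of $\pi_q$ are line sections of the cubic $\varphi(E)$. Every point of $S_\l$ has all three coordinates in a single fibre of $\pi_q$, so $\pi_q\circ p_1=\pi_q\circ p_2=\pi_q\circ p_3$ on $S_\l$. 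Hence
\[
p_1^*\OP_E(3[e])|_{S_\l}\cong(\pi_q\circ p_1)^*\OP_{\p^1}(1)=(\pi_q\circ p_2)^*\OP_{\p^1}(1)\cong p_2^*\OP_E(3[e])|_{S_\l},
\]
so $p_1^*[e]-p_2^*[e]$ is $3$-torsion in $\operatorname{Jac}(S_\l)$. The comparison with $\pi_q^*\OP(1)$ that you propose proves the opposite of what you need.

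Concretely, take $y=e$ and $n=3$. Then $\F_3\cong\OP_{S_\l}$, so $H^0(S_\l,\F_3)=\C$ injects into $H^1(A,\F_1)$, and the map $H^1(A,\F_1)\to H^1(A,\F_2)$ has a one-dimensional kernel for every smooth $S_\l$. The lemma is therefore false as stated, and so is \ref{_proposition_S_l_is_not_quasi-diagonal_Lemma_}: every $S_\l$ with $q\notin\varphi(E)$ is a restricted quasi-diagonal over $(E,\OP_E(3[e]))$. These curves even form a two-dimensional family, which is also at odds with \ref{_at_most_countably_many_quasi-diagonals_Theorem_}. No Prym or theta-function computation can close this gap.

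The paper's own argument fails exactly where you suspected. It factors $\F_1\to\G\to\F_2$, but $\psi\circ\varphi$ is multiplication by the section with divisor $(p_1^{-1}(x)+p_2^{-1}(x)+p_3^{-1}(x))\cap A$. That is the reducible member of the linear system, not the section cutting out the smooth $S_\l$. Even for that section, the claimed injectivity of $H^1(A,\G)\to H^1(A,\F_2)$ is wrong. By K\"unneth this map is
\[
H^0(\OP(n[y]-[x]))\otimes H^1(\OP(-n[y]-[x]))\to H^0(\OP(n[y]))\otimes H^1(\OP(-n[y])).
\]
The second factor $\C^{n+1}\to\C^n$ has a one-dimensional kernel, namely the image of $H^0(\C_x)$, so the tensor map has an $(n-1)$-dimensional kernel: an injection of sheaves need not be injective on $H^1$. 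Your filtration computation for the reducible member is consistent with this. What your Step 2 does prove is the correct weaker statement: the map is injective unless $y$ is a $3n$-torsion point, in particular for every non-torsion $y$.
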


\hfill

\proof 
Consider the following line bundle $\mathcal{G}$ on $A$:
\[
\G = p_1^*\OP(n[y]-[x])\otimes p_2^*\OP(-n[y]-[x]).
\]
Denote by $\varphi\colon \F_1\to \G$ and $\psi\colon \G\to \F_2$ the natural embeddings of sheaves.
The composition $\psi\circ \varphi\colon \F_1\to \F_2$ is the embedding from the exact sequence 
\eqref{_exact_sequence_for_qd_Equation_}.

Consider the following short exact sequence:
\[
0\to \F_1\xrightarrow{\varphi} \G\to \G|_{p_3^{-1}(x)} \to 0.
\]
The degree of the line bundle $\G|_{p_3^{-1}(x)}$ on the curve $p_3^{-1}(x)$ equals $-2$. 
Thus, we observe that $H^0(A,\G|_{p_3^{-1}(x)} ) $ vanishes  and the map $\varphi$ induces the embedding $H^1(A,\F_1)\to H^1(A,\G)$.

Since both line bundles $\G = \OP(n[y]-[x])\boxtimes \OP(-n[y]-[x])$ and 
$\F_2 = \OP(n[y])\boxtimes \OP(-n[y])$ on $A$ are box products
then their cohomology groups can be computed with the K\"unneth formula. 
Note that $\psi$ is induced by embeddings of the components of the box products.
Thus, $\psi$ induces the embedding $H^1(A,\G)\to H^1(A,\F_2)$.
This completes the proof.
\endproof

\hfill

{\bf Acknowledgments:} 
We are grateful to Andrey Soldatenkov and Olivier
Martin for interesting discussions and comments.

\hfill

\scriptsize

\noindent
{\sc Leila Abubakarova\\
{\sc National Research University Higher School of Economics, Moscow, Russia; \\
École normale supérieure, Paris, France.\\
\tt leila.abubakarova.a@gmail.com}
}

\hfill

\noindent
{\sc Alexandra Kuznetsova\\ 
{\sc Higher School of Modern Mathematics in Moscow Institute of Physics and Technology, Dolgoprudny, Russia; \\
     Laboratory of Algebraic Geometry in Higher School of Economics, Moscow, Russia;  \\
     Steklov Mathematical Institute of Russian Academy of Sciences, Moscow, Russia.\\
\tt sasha.kuznetsova.57@gmail.com}     
}

\hfill

\noindent {\sc Misha Verbitsky\\
{\sc Instituto Nacional de Matem\'atica Pura e
              Aplicada (IMPA) \\ Estrada Dona Castorina, 110\\
Jardim Bot\^anico, CEP 22460-320\\
Rio de Janeiro, RJ - Brasil\\
\tt  verbit@impa.br }
}

\end{document}